\providecommand{\U}[1]{\protect\rule{.1in}{.1in}}
\theoremstyle{plain}
\newtheorem{theorem}{Theorem}[section]
\newtheorem{corollary}[theorem]{Corollary}
\theoremstyle{definition}
\theoremstyle{remark}
\newtheorem{example}{Example}[section]
\numberwithin{equation}{section}
\numberwithin{table}{section}
\numberwithin{figure}{section}
\begin{document}
\title[An overview of $(\kappa,\tau)$-regular sets and their applications]{\sc An overview of $(\kappa,\tau)$-regular sets and their applications}
\author{Domingos M. Cardoso}
\address{Center for Research and Development in Mathematics and Applications,
Department of Mathematics, University of Aveiro, 3810-193 Aveiro, Portugal}
\email{dcardoso@ua.pt}
\date{\today }
\subjclass[2010]{05C50, 05C69, 05C75}
\keywords{Spectral graph theory, combinatorial structures in graphs.}
\maketitle

\begin{abstract}
A $(\kappa,\tau)$-regular set is a vertex subset $S$ inducing a $\kappa$-regular subgraph such that every vertex out of $S$ has $\tau$
neighbors in $S$. This article is an expository overview of the main results obtained for graphs with $(\kappa,\tau)$-regular sets. The
graphs with classical combinatorial structures, like perfect matchings, Hamilton cycles, efficient dominating sets, etc, are characterized
by $(\kappa, \tau)$-sets whose determination is equivalent to the determination of those classical combinatorial structures. The
characterization of graphs with these combinatorial structures are presented. The determination of $ (\kappa,\tau) $-regular sets in a finite
number of steps is deduced and the main spectral properties of these sets are described.
\end{abstract}

\bigskip

\noindent \textbf{Key words}: perfect matching; Hamilton cycle; efficient dominating set; maximum $k$-regular induced
subgraph; graph spectra.

\bigskip

\section{Introduction}
The graphs $G$ considered in this work are simple and undirected with \textit{order} (number of vertices) $n$ and \textit{size} (number of edges) $m$.
The vertex set of $G$ is denoted by $V(G)$ and its edge set by $E(G)$. An edge with \textit{end-vertices} i and j is denoted by $ij$. In such a case,
we say that these vertices are \textit{adjacent}. The \textit{neighborhood} of a vertex $u \in V(G)$ is $N_G(u) = \{v \in V(G): uv \in E(G)\}$ and the
\textit{degree} of the vertex $u$ is $d_G(u)=|N_G(u)|$. The \textit{maximum} (\textit{minimum}) \textit{degree} of the vertices of a graph $G$ is
$\Delta(G)$ ($\delta(G)$). Several other concepts and notation will be introduced throughout the text and for further basic notation and basic concepts the reader is referred to \cite{BH2012} and \cite{CRS10}.

The concept of $(\kappa,\tau)$-regular set for general graphs first appeared in \cite{CardosoRama04} as a particular case of the concept of $(X,Y)$-\textit{set}
introduced in \cite{telle93} as a vertex subset $S$ of a graph $G$ such that
$$
|N_{G}(v) \cap S| \in \left\{\begin{array}{ll}
                                    X, & \hbox{if } \; v \in S\\
                                    Y, & \hbox{otherwise,}
                             \end{array}\right.
$$
where $X$ and $Y$ are subsets of $\{0, 1, \cdots , n-1\}$. The vertex subset $S$ is $(\kappa,\tau)$-\textit{regular} when the subsets $X$ and
$Y$ are the singleton subsets $X = \{k\}$ and $Y = \{\tau\}$, that is, $S$ is $(\kappa,\tau)$-regular if $\forall v \in V(G)$
$$
|N_G(v) \cap S| = \left\{\begin{array}{ll}
                                 \kappa, & \hbox{if } v \in S\\
                                 \tau,   & \hbox{otherwise.}
                          \end{array}\right.
$$
When a graph $G$ is $\kappa$-regular, for convenience, its vertex set $V(G)$ is consider a $(\kappa,0)$-regular set. From now on,
for any graph $G$, it is assumed that a $(\kappa,\tau)$-regular set $S \subset V(G)$ (that is, $S$ is strictly included in $V(G)$)
is such that $\tau>0$.

\begin{example}
The Pertersen graph depicted in Figure~\ref{PetersenGraph} has several $(k,\tau)$-regular sets.
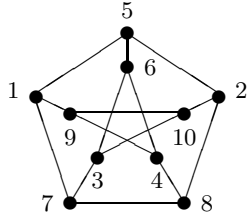
\begin{figure}[ht]
\begin{center}
\unitlength=0.3 mm
\begin{picture}(260,80)(120,90)
%
\put(235,110){\line(2,1){55}}  
\put(265,110){\line(-2,1){55}} 
\put(250,150){\line(-1,-3){14}}
\put(250,150){\line(1,-3){14}} 
\put(225,90){\line(1,0){50}}   
\put(225,90){\line(-1,3){16}}  
\put(225,90){\line(3,5){12}}   
\put(275,90){\line(1,3){16}}   
\put(275,90){\line(-3,5){12}}  
\put(225,130){\line(1,0){50}}  
\put(250,165){\line(-3,-2){40}}
\put(250,165){\line(0,-1){15}} 
\put(250,165){\line(3,-2){40}} 
%
\put(237,110){\circle*{5.7}} 
\put(263,110){\circle*{5.7}} 
\put(225,90){\circle*{5.7}} 
\put(275,90){\circle*{5.7}} 
\put(250,165){\circle*{5.7}}
\put(250,150){\circle*{5.7}}
\put(210,137){\circle*{5.7}}
\put(290,137){\circle*{5.7}}
\put(225,129){\circle*{5.7}}
\put(275,129){\circle*{5.7}}
%
%
\put(237,100){\makebox(0,0){\footnotesize 3}}
\put(263,100){\makebox(0,0){\footnotesize 4}}
\put(225,119){\makebox(0,0){\footnotesize 9}}
\put(275,119){\makebox(0,0){\footnotesize 10}}
\put(260,150){\makebox(0,0){\footnotesize 6}}
\put(215,90){\makebox(0,0){\footnotesize 7}}
\put(285,90){\makebox(0,0){\footnotesize 8}}
\put(250,175){\makebox(0,0){\footnotesize 5}}
\put(200,140){\makebox(0,0){\footnotesize 1}}
\put(300,140){\makebox(0,0){\footnotesize 2}}
\end{picture}
\caption{The Petersen graph.} \label{PetersenGraph}
\end{center}
\end{figure}
For instance, $S= \left\{\begin{array}{ll}
                             \{1,2,3,4\},      & \hbox{ is } $(0,2)$-regular;\\
                             \{5,6,7,8,9,10\}, & \hbox{ is } $(1,3)$-regular;\\
                             \{ 1,2, 5,7,8 \}, & \hbox{ is } $(2,1)$-regular.
                        \end{array}
                      \right.$
\end{example}

The following properties are immediate.
\begin{enumerate}
\item If an arbitrary graph $H$ has a $(\kappa,\tau)$-regular set $S \subset V(H)$, then $S$ is a $(|S|-\kappa-1,|S|-\tau)$-regular set for
      the complement graph of $H$.
\item If a $p$-regular graph $G$ has a $(\kappa,\tau)$-regular set $S \subset V(H)$, then  $V(G) \setminus S$ is $(p-\tau,p-\kappa)$-regular.
\end{enumerate}

Now we recall that the \emph{adjacency matrix} of a graph $G$ of order $n$ is the $n \times n$ matrix $\mathbf{A}_G$
whose $(i,j)$-entry is equal to $1$ whether $ij \in E(G)$ and is equal to $0$ otherwise.
This matrix $\mathbf{A}_G$ is symmetric and then it has $n$ real eigenvalues, herein indexed in non increasing order and denoted by
$\lambda_1(G) \ge \lambda_2(G) \ge \dots \ge \lambda_n(G)$. These eigenvalues of $\mathbf{A}_G$ are also called the \emph{eigenvalues} of $G$.
The \textit{spectrum} of $G$ is the multiset
$$
\sigma(G)=\{\mu_1^{[m_1]}, \mu_2^{[m_2]}, \ldots, \mu_q^{[m_q]}\},
$$
where $\mu_1, \mu_2, \ldots, \mu_q$ are the $q$ distinct eigenvalues of $G$ and $m_j$ denotes the multiplicity of $\mu_j$, for $j=1, 2, \ldots, q$. Therefore,
{\footnotesize$$
\underbrace{\lambda_1(G) = \cdots = \lambda_{m_1}(G)}_{\mu_1} > \underbrace{\lambda_{m_1+1}(G) = \cdots = \lambda_{m_1+m_2}(G)}_{\mu_2} > \cdots > \underbrace{\lambda_{n-m_q+1}(G) = \cdots = \lambda_{n}(G)}_{\mu_q}
$$}
and $\sum_{j=1}^{q}{m_j}=n$. The associated eigenspace of an eigenvalue $\lambda$ of a graph $G$ is denoted by ${\mathcal E}_G(\lambda)$.

The $(\kappa,\tau)$-regular sets were first investigated in \cite{thompson81}, in the context of regular graphs, where a subgraph of a graph
$G$ induced by a $(\kappa,\tau)$-regular set was called an \textit{eigengraph} of $G$. Furthermore, a necessary and sufficient condition for
the existence of a $(\kappa,\tau)$-regular set in a regular graph was deduced. Theorem~\ref{Th_Thompson} states a slightly different version
deduced in \cite{CardosoCvetkovic06} also for regular graphs. From now on, $\hat{\mathbf{e}}$ denotes the all-one vector with $n$ components
and the \textit{characteristic vector} of a vertex subset $S$ of a graph is $x(S) \in \{0,1\}^{V(G)}$ such that its $i$-th component is qual
to $1$ if $i \in V(G)$ and equal to $0$ otherwise.

\begin{theorem}\cite{thompson81,CardosoCvetkovic06}\label{Th_Thompson}
A $p$-regular graph $G$ has a $(\kappa,\tau)$-regular set, with $\kappa < p$, if and only if $\kappa - \tau$ is an eigenvalue of $G$ and
there exists $\mathbf{x} \in \{0,1\}^{V(G)}$ such that $\mathbf{x} - \frac{\tau}{p+\tau-\kappa}\hat{\mathbf{e}} \in {\mathcal E}_G(\kappa-\tau)$.
Furthermore, $\mathbf{x}$ is the characteristic vector of a $(\kappa-\tau)$-regular set.
\end{theorem}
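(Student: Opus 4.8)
The plan is to translate the defining property of a $(\kappa,\tau)$-regular set into a single matrix equation and then use the $p$-regularity of $G$ to convert it into an eigenvalue equation. Put $\mathbf{x}=x(S)$. Since $(\mathbf{A}_G\mathbf{x})_v=|N_G(v)\cap S|$ for every vertex $v$, the two-case definition of a $(\kappa,\tau)$-regular set is equivalent to the single identity
\[
\mathbf{A}_G\mathbf{x}=(\kappa-\tau)\mathbf{x}+\tau\hat{\mathbf{e}},
\]
because the $v$-th entry of the right-hand side equals $(\kappa-\tau)+\tau=\kappa$ when $v\in S$ and equals $\tau$ when $v\notin S$. I would first establish this equivalence explicitly, as it is the bridge between the combinatorial and the spectral descriptions and is used in both directions.

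For necessity, assume $S$ is $(\kappa,\tau)$-regular, so the identity above holds. Using $\mathbf{A}_G\hat{\mathbf{e}}=p\hat{\mathbf{e}}$, I would test the shifted vector $\mathbf{y}=\mathbf{x}-\alpha\hat{\mathbf{e}}$ and compute
\[
\mathbf{A}_G\mathbf{y}=(\kappa-\tau)\mathbf{x}+\tau\hat{\mathbf{e}}-\alpha p\hat{\mathbf{e}}=(\kappa-\tau)\mathbf{y}+\bigl(\tau-\alpha(p+\tau-\kappa)\bigr)\hat{\mathbf{e}}.
\]
Choosing $\alpha=\frac{\tau}{p+\tau-\kappa}$ annihilates the final term and gives $\mathbf{A}_G\mathbf{y}=(\kappa-\tau)\mathbf{y}$. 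To conclude that $\kappa-\tau$ is genuinely an eigenvalue one must still check $\mathbf{y}\neq\mathbf{0}$: since $\tau>0$ and $\kappa<p$ force $0<\alpha<1$, the entries of $\mathbf{y}$ equal $1-\alpha$ on $S$ and $-\alpha$ off $S$, so no coordinate vanishes. This delivers both required conclusions, with $\mathbf{x}\in\{0,1\}^{V(G)}$ and $\mathbf{x}-\frac{\tau}{p+\tau-\kappa}\hat{\mathbf{e}}\in\mathcal{E}_G(\kappa-\tau)$.

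For sufficiency I would reverse the computation. Given $\mathbf{x}\in\{0,1\}^{V(G)}$ with $\mathbf{y}=\mathbf{x}-\alpha\hat{\mathbf{e}}\in\mathcal{E}_G(\kappa-\tau)$ and $\alpha=\frac{\tau}{p+\tau-\kappa}$, expanding $\mathbf{A}_G\mathbf{y}=(\kappa-\tau)\mathbf{y}$, substituting $\mathbf{A}_G\hat{\mathbf{e}}=p\hat{\mathbf{e}}$, and using the relation $\alpha(p+\tau-\kappa)=\tau$ recovers the identity $\mathbf{A}_G\mathbf{x}=(\kappa-\tau)\mathbf{x}+\tau\hat{\mathbf{e}}$. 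Reading this identity coordinatewise for the $0/1$ vector $\mathbf{x}$ then shows that $S=\{v\in V(G):\mathbf{x}_v=1\}$ induces a $\kappa$-regular subgraph while every vertex outside $S$ has exactly $\tau$ neighbours in $S$; hence $S$ is $(\kappa,\tau)$-regular with $\mathbf{x}=x(S)$, which establishes the final assertion that $\mathbf{x}$ is the characteristic vector of this set.

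The calculations themselves are entirely routine; the one place requiring genuine care, and the main obstacle to a clean argument, is the nonvanishing of $\mathbf{y}$ in the necessity direction. This is precisely where the hypotheses $\tau>0$ and $\kappa<p$ are needed, ensuring $\kappa-\tau\in\sigma(G)$ rather than the degenerate possibility that the shift produces the zero vector. I would also isolate the relation $\alpha(p+\tau-\kappa)=\tau$ once at the outset, since it is exactly what makes the shift by $\alpha\hat{\mathbf{e}}$ reversible and therefore drives both implications symmetrically.
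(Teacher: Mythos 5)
Your proof is correct. A point of context first: the paper itself contains no proof of this statement---it is quoted from \cite{thompson81,CardosoCvetkovic06}---so the natural benchmark is the paper's proof of Theorem~\ref{kt_theorem} together with the solution-structure viewpoint developed in Section~3. Your ``bridge'' identity $\mathbf{A}_G\mathbf{x}=(\kappa-\tau)\mathbf{x}+\tau\hat{\mathbf{e}}$ is exactly the linear system \eqref{kt_equation}, and your coordinatewise reading of it in the sufficiency direction reproduces the paper's proof of Theorem~\ref{kt_theorem}. What you add on top is precisely what $p$-regularity buys: since $\mathbf{A}_G\hat{\mathbf{e}}=p\hat{\mathbf{e}}$ and $p+\tau-\kappa>0$ (this is where $\kappa<p$ enters), the vector $\alpha\hat{\mathbf{e}}$ with $\alpha=\frac{\tau}{p+\tau-\kappa}$ is a particular solution of \eqref{kt_equation}, so finding a $0$--$1$ solution amounts to finding a $0$--$1$ vector in $\alpha\hat{\mathbf{e}}+{\mathcal E}_G(\kappa-\tau)$; your shift computation implements exactly this, in the same spirit as the paper's later Theorem~\ref{kt_determination} for general graphs. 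You were also right to single out the nonvanishing of $\mathbf{y}=\mathbf{x}-\alpha\hat{\mathbf{e}}$ as the one delicate point: the observation that $0<\alpha<1$ forces every entry of $\mathbf{y}$ to equal $1-\alpha$ or $-\alpha$, hence to be nonzero, is what legitimately upgrades ``$\mathbf{y}$ satisfies the eigenvalue equation'' to ``$\kappa-\tau\in\sigma(G)$,'' a step that a careless argument would miss. One small bookkeeping remark: you invoke $\tau>0$ as a hypothesis, but the theorem does not state it explicitly; it follows from the paper's standing convention, because $\kappa<p$ forces $S\neq V(G)$ (every vertex of $S=V(G)$ would have $p$ neighbours in $S$), and proper $(\kappa,\tau)$-regular sets are assumed in the paper to have $\tau>0$. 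With that one sentence made explicit, your argument is complete and self-contained.
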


More generally, the next theorem states a necessary and sufficient condition for the existence of a $(\kappa,\tau)$-regular set in an arbitrary
graph. This version is a stronger variant than the one presented in \cite{CardosoRama04}.

\begin{theorem}\label{kt_theorem}
A graph $G$ has a $(\kappa,\tau)$-regular set if and only if the linear system
\begin{equation}
\left(\mathbf{A}_G - (\kappa - \tau)I_n\right)x = \tau \hat{\mathbf{e}}, \label{kt_equation}
\end{equation}
where $I_n$ is the identity matrix of order $n$, has a $0-1$ solution. Furthermore, every $0-1$ solution is the characteristic vector of a
$(\kappa,\tau)$-regular set.
\end{theorem}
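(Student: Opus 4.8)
The plan is to translate the combinatorial definition of a $(\kappa,\tau)$-regular set directly into the linear system, using the single elementary observation that multiplication by the adjacency matrix counts neighbors. Specifically, if $x \in \{0,1\}^{V(G)}$ is the characteristic vector of a vertex subset $S$, then for every vertex $v$ the $v$-th component of $\mathbf{A}_G x$ is
\[
(\mathbf{A}_G x)_v = \sum_{u \in V(G)} (\mathbf{A}_G)_{vu}\, x_u = \sum_{u \in N_G(v)} x_u = |N_G(v) \cap S|.
\]
This identity is the whole engine of the argument, and it is the one step worth stating explicitly; everything else is a case split on whether $v$ lies in $S$.

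For the forward direction, I would assume $S$ is $(\kappa,\tau)$-regular and set $x = x(S)$. I would then evaluate the $v$-th component of $\left(\mathbf{A}_G - (\kappa-\tau)I_n\right)x$ in the two cases delivered by the definition. When $v \in S$ we have $x_v = 1$ and $|N_G(v) \cap S| = \kappa$, so the component equals $\kappa - (\kappa-\tau) = \tau$; when $v \notin S$ we have $x_v = 0$ and $|N_G(v) \cap S| = \tau$, so the component equals $\tau - 0 = \tau$. In both cases the value is $\tau$, hence $x$ solves \eqref{kt_equation} and the system admits a $0$–$1$ solution.

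For the converse, I would take any $0$–$1$ solution $x$ of \eqref{kt_equation} and let $S = \{v : x_v = 1\}$ be its support. Reading off the $v$-th component of the equation and applying the counting identity gives $|N_G(v) \cap S| - (\kappa-\tau)\,x_v = \tau$. If $v \in S$ then $x_v = 1$, forcing $|N_G(v) \cap S| = \kappa$; if $v \notin S$ then $x_v = 0$, forcing $|N_G(v) \cap S| = \tau$. Thus $S$ is $(\kappa,\tau)$-regular, and since $x$ is by construction its characteristic vector, the ``furthermore'' clause follows at once.

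I do not expect any genuine obstacle here: once the counting identity is in hand, both implications reduce to the same two-line computation, so the equivalence is essentially a restatement of the definition in matrix form. The only points requiring a little care are bookkeeping the sign of the $(\kappa-\tau)x_v$ term so that the two cases collapse to the common right-hand side $\tau$, and verifying that the argument uses nothing beyond $x$ being $0$–$1$, which is exactly what makes its support well defined and the two-case split exhaustive.
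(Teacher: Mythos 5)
Your proof is correct and follows essentially the same route as the paper: reading the system \eqref{kt_equation} componentwise via the identity $(\mathbf{A}_G x)_v = |N_G(v)\cap S|$ and splitting on whether $v$ lies in $S$. The only difference is that you spell out the forward direction, which the paper dismisses as immediate.
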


\begin{proof}
Let $S \subset V(G)$ be a $(\kappa,\tau)$-regular set. Then it is immediate that the characteristic vector of $S$, $x(S)$, is a solution of
\eqref{kt_equation}. Conversely, assuming that $\textbf{x}=(\textbf{x}_1, \ldots, \textbf{x}_n) \in \{0,1\}^{V(G)}$ is a solution of
\eqref{kt_equation}, its $i$-th equation, for $i=1, \ldots, n$, can be written as follows
\begin{eqnarray*}
\sum_{j \in N_G(i)}{\textbf{x}_j} - (\kappa - \tau)\textbf{x}_i = \tau & \Leftrightarrow & \sum_{j \in N_G(i): \textbf{x}_j=1}{\textbf{x}_j} = \left\{\begin{array}{ll}
                                                                                                          \kappa, & \hbox{if } \textbf{x}_i=1 \\
                                                                                                          \tau,   & \hbox{otherwise.}
                                                                                                          \end{array}\right.
\end{eqnarray*}
Therefore, the vertex set defined by $\textbf{x}$ as being its characteristic vector is $(\kappa,\tau)$-regular.
\end{proof}

Regarding the cardinality of $(\kappa,\tau)$-regular sets, the next theorem extends the result obtained in \cite{barbosa_cardoso04}
for $(0,\tau)$-regular sets. The graphs with $(0,\tau)$-regular sets were called $\tau$-regular-stable graphs in \cite{barbosa_cardoso04} .

\begin{theorem}\label{th_1.3}
Let $G$ be  graph with a $(\kappa,\tau)$-regular set $S \subset V(G)$, such that $\delta(G)+\tau > \kappa$. Then
\begin{equation}
\frac{n\tau}{\Delta(G) - (\kappa - \tau)} \le |S| \le \frac{n\tau}{\delta(G) - (\kappa - \tau)}.\label{cardinality}
\end{equation}
\end{theorem}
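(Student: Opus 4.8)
The plan is to count, in two different ways, the total number of ordered incidences between the vertices of $G$ and the vertices of $S$; that is, the quantity $\sum_{v \in V(G)} |N_G(v) \cap S|$. On one hand, the defining property of a $(\kappa,\tau)$-regular set assigns the value $\kappa$ to each of the $|S|$ vertices of $S$ and the value $\tau$ to each of the remaining $n-|S|$ vertices, so this sum equals $\kappa|S| + \tau(n-|S|)$. On the other hand, interchanging the order of summation, each $u \in S$ is counted once for every neighbour it has in $G$, whence the same sum equals $\sum_{u \in S} d_G(u)$. Equating the two expressions gives the key identity $\sum_{u \in S} d_G(u) = \kappa|S| + \tau(n-|S|)$.

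Next I would bound the left-hand side of this identity by the trivial degree estimates $\delta(G)|S| \le \sum_{u \in S} d_G(u) \le \Delta(G)|S|$, which hold because every vertex of $G$, in particular every vertex of $S$, has degree between $\delta(G)$ and $\Delta(G)$. Substituting the identity produces the chain $\delta(G)|S| \le \kappa|S| + \tau(n-|S|) \le \Delta(G)|S|$, which I would then rearrange by collecting the terms in $|S|$. The left inequality becomes $(\delta(G)-(\kappa-\tau))|S| \le n\tau$, and the right inequality becomes $n\tau \le (\Delta(G)-(\kappa-\tau))|S|$.

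Finally I would isolate $|S|$ in each inequality, and this is the only place where the hypothesis actually enters: dividing requires the denominators to be positive. The assumption $\delta(G)+\tau>\kappa$ is precisely the statement $\delta(G)-(\kappa-\tau)>0$, and since $\Delta(G)\ge\delta(G)$ it also forces $\Delta(G)-(\kappa-\tau)>0$, so both divisions are legitimate and preserve the direction of the inequalities. From the left relation I obtain the upper bound $|S| \le n\tau/(\delta(G)-(\kappa-\tau))$, and from the right relation the lower bound $n\tau/(\Delta(G)-(\kappa-\tau)) \le |S|$, which together are exactly \eqref{cardinality}. Since the argument is a weighted handshake count, I do not anticipate any substantive obstacle; the only point demanding care is the bookkeeping of signs during the rearrangement, so as to confirm that the $\Delta(G)$-denominator attaches to the lower bound and the $\delta(G)$-denominator to the upper bound, as the smaller denominator yields the larger fraction.
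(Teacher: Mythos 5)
Your proof is correct and follows essentially the same route as the paper: both arguments are a double count of adjacencies between $S$ and the rest of the graph (the paper counts the cross edges $(n-|S|)\tau$ directly, while you count all incidences $\sum_{v}|N_G(v)\cap S| = \kappa|S|+\tau(n-|S|) = \sum_{u\in S}d_G(u)$, which differs only by the term $\kappa|S|$ on both sides), and both then squeeze this quantity between $\delta(G)|S|$ and $\Delta(G)|S|$ and rearrange. Your write-up is in fact more explicit than the paper's about where the hypothesis $\delta(G)+\tau>\kappa$ is used, namely to guarantee both denominators are positive so the divisions preserve the inequalities.
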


\begin{proof}
Since $S \subset V(G)$, from the definition of a $(\kappa,\tau)$-regular set, $\tau>0$ and we obtain
$$
|S|(\delta(G)-\kappa) \le (n-|S|)\tau \le |S|(\Delta(G)-\kappa).
$$
Therefore, $|S|\left(\delta(G)-(\kappa-\tau)\right) \le n\tau \le |S|\left(\Delta(G)- (\kappa-\tau)\right)$ and the inequalities
\eqref{cardinality} follow.
\end{proof}

As immediate consequence of Theorem~\ref{th_1.3}, if $G$ is a $p$-regular graph with a $(\kappa,\tau)$-regular set
$S \subset V(G)$, then $|S| = \frac{n\tau}{p - (\kappa - \tau)}$.

\section{Characterization of graphs with classical combinatorial structures}

There are several classical combinatorial structures which can be characterized by $(\kappa,\tau)$-regular sets as it is the case of perfect
matchings, Hamiltonian cycles, efficient dominating sets and dominating induced matchings (also called efficient edge dominating sets). Furthermore,
there are graphs with particular combinatorial structure that can be characterized ´by using $(\kappa,\tau)$-regular sets, as it is
the case of strongly regular graphs.

The next theorem which appear in \cite{barbosa_cardoso04} (see also \cite{cardoso01}) states a necessary and sufficient condition for the existence of perfect matchings in graphs.

\begin{theorem}\cite{barbosa_cardoso04}
A graph $G \ne K_2$ has a perfect matching if and only if its line graph $L(G)$ has a $(0,2)$-regular set.
\end{theorem}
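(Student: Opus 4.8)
The plan is to exploit the dictionary between $L(G)$ and $G$: the vertices of $L(G)$ are the edges of $G$, and two such vertices are adjacent exactly when the corresponding edges of $G$ share an end-vertex. Under this dictionary a vertex subset $S$ of $L(G)$ is a set of edges of $G$, and the two defining conditions of a $(0,2)$-regular set translate cleanly. The condition $\kappa=0$ says that the edges of $S$ are pairwise non-adjacent in $L(G)$, i.e. $S$ is a matching $M$ of $G$; the condition $\tau=2$ says that every edge of $G$ not in $M$ shares an end-vertex with exactly two edges of $M$. So the whole statement reduces to showing that a matching $M$ of $G$ is perfect if and only if every edge outside $M$ meets exactly two edges of $M$, and I would prove this as a double implication.

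For the forward direction I would start from a perfect matching $M$ and view it as a vertex subset of $L(G)$. Since $M$ is a matching, its edges are pairwise non-adjacent, so $M$ induces an edgeless (hence $0$-regular) subgraph of $L(G)$. Then I would take an arbitrary edge $e=uv\notin M$ and use perfectness: there is a unique matching edge $f_u$ incident to $u$ and a unique matching edge $f_v$ incident to $v$. Both differ from $e$ (because $e\notin M$), and $f_u\ne f_v$, for otherwise the common edge would be incident to both $u$ and $v$ and hence equal $e$. Since any matching edge adjacent to $e$ must be incident to $u$ or to $v$, these are the only two, so $e$ has exactly two neighbors in $M$ within $L(G)$. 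This yields $\tau=2$ and shows $M$ is a $(0,2)$-regular set.

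For the converse I would take a $(0,2)$-regular set $S$, which by the first paragraph is a matching $M$, and argue that $M$ covers every vertex. Suppose some vertex $w$ were left uncovered and pick an edge $e=wz$ incident to it. Then $e\notin M$, so $e$ must have exactly two neighbors in $M$; but any such neighbor is incident to $w$ or to $z$. No matching edge is incident to $w$ (it is uncovered), and at most one matching edge is incident to $z$ (because $M$ is a matching), so $e$ can have at most one neighbor in $M$---a contradiction. Hence every vertex incident to some edge is covered, and $M$ is perfect.

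The step I expect to be delicate is not the counting but the degenerate behavior of $L(G)$, which is exactly what forces the hypothesis $G\ne K_2$: for $K_2$ the line graph is a single vertex with no proper nonempty subset carrying $\tau>0$, so the equivalence must exclude it. I would also check that the converse genuinely reaches every vertex, which requires that no vertex be isolated; a perfect matching forces this in the forward direction, and in the converse one uses that the relevant graphs carry no isolated vertices so that the edge $e=wz$ in the argument above exists. Apart from these boundary considerations, the argument is a direct translation through the line-graph correspondence with no heavy machinery.
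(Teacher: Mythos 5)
The paper does not actually prove this theorem: it is stated with a citation to \cite{barbosa_cardoso04} only, so there is no internal proof to compare yours against. Judged on its own, your argument is correct and is the natural one; it also runs exactly parallel to the proof the paper does supply for the Hamiltonian analogue (a graph $G$ is Hamiltonian iff $L(G)$ has a $(2,4)$-regular set inducing a connected subgraph), using the same line-graph dictionary: the condition $\kappa=0$ encodes ``$S$ is a matching $M$,'' and $\tau=2$ encodes ``every edge outside $M$ meets exactly two edges of $M$.'' Both of your directions are sound --- the two-neighbor count for $uv\notin M$ via the unique matching edges at $u$ and at $v$, and the uncovered-vertex contradiction for the converse.

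Two boundary points deserve sharpening. First, your isolated-vertex caveat is genuinely needed, not just prudent: taken literally the statement is false for graphs with isolated vertices (e.g.\ $G=P_4\cup K_1$ has no perfect matching, yet $L(G)=P_3$ has the $(0,2)$-regular set consisting of its two end-vertices), so ``no isolated vertices'' must be read into the hypothesis, as you do. Second, the one degenerate case your $K_2$ remark does not cover is $M=E(G)$, i.e.\ every component of $G$ equal to $K_2$: then $M=V(L(G))$, your verification of $\tau=2$ is vacuous, and whether the full vertex set qualifies as $(0,2)$-regular is a matter of the paper's convention, which declares the whole vertex set of a $\kappa$-regular graph to be $(\kappa,0)$-regular and reserves $\tau>0$ for proper subsets. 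Under that strict convention the forward implication fails not only for $K_2$ but for $2K_2, 3K_2, \dots$, none of which the statement excludes; under the liberal vacuous reading (which your forward argument implicitly uses) even $K_2$ itself is harmless. This tension lives in the statement's conventions rather than in your mathematics, and it disappears entirely for connected $G$, where $K_2$ is the unique graph whose edge set equals a perfect matching --- which is presumably why the exclusion is phrased as $G\ne K_2$.
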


It follows a necessary and sufficient condition for Hamiltonian graphs published in \cite{ACS2013} (for the reader convenience the proof is
also presented).

\begin{theorem}\cite{ACS2013}
A graph $G$ is Hamiltonian if and only if its line graph has a $(2,4)$-regular set inducing a connected subgraph.
\end{theorem}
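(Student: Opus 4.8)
The plan is to pass to the identification of $V(L(G))$ with $E(G)$, under which a set $S \subseteq V(L(G))$ corresponds to the subgraph $H$ of $G$ whose edge set is $S$ (on the vertices incident to those edges). First I would record the elementary degree count in a line graph: for an edge $e=uv$ of $G$, the number of its neighbours in $L(G)$ that belong to $S$ equals $d_H(u)+d_H(v)-2$ when $e\in S$ and $d_H(u)+d_H(v)$ when $e\notin S$, since the only edge incident to both $u$ and $v$ is $e$ itself and it is discounted precisely when $e\in S$. Here $d_H(\cdot)$ denotes the degree in $H$. Consequently, $S$ is $(2,4)$-regular in $L(G)$ exactly when $d_H(u)+d_H(v)=4$ for every edge $uv\in E(G)$ (the case $e\in S$ yielding $\kappa=2$, the case $e\notin S$ yielding $\tau=4$), together with the requirement $\tau=4>0$ that some edge of $G$ lie outside $S$; moreover the induced subgraph $L(G)[S]$ is exactly $L(H)$.

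For the direct implication, given a Hamiltonian cycle $C$ of $G$ I would set $S=E(C)$, so that $H=C$. Then $L(G)[S]=L(C)\cong C_n$ is connected and $2$-regular, giving $\kappa=2$; and since $C$ spans $V(G)$ every vertex has $d_H=2$, so $d_H(u)+d_H(v)=4$ for every edge $uv$ of $G$ and $\tau=4$. Hence $S$ is a $(2,4)$-regular set inducing a connected subgraph, as required.

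For the converse, suppose $S$ is $(2,4)$-regular and $L(G)[S]=L(H)$ is connected. Being connected and $2$-regular, $L(H)$ is a cycle, so $H$ is connected and satisfies $d_H(u)+d_H(v)=4$ on each of its edges; the classical description of the graphs whose line graph is a cycle then forces $H\cong C_k$ or $H\cong K_{1,3}$. I would discard the claw as follows: since $\tau=4>0$ means $S$ is a proper subset, there is an edge $e=uv\notin S$, and the identity $d_H(u)+d_H(v)=4$ with $H$-degrees confined to $\{0,1,3\}$ can only be met by joining the centre to a leaf, an edge already in $S$, a contradiction. Hence $H\cong C_k$. To finish I would show the cycle is spanning: for any edge $uv\notin S$ the relation $d_H(u)+d_H(v)=4$ with degrees in $\{0,2\}$ forces $d_H(u)=d_H(v)=2$, so no edge of $G$ leaves $V(H)$; as $G$ has no isolated vertices this gives $V(H)=V(G)$, whence $H\cong C_n$ is a Hamiltonian cycle.

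The main obstacle is the converse, and inside it the spanning argument. Translating the $(2,4)$-condition into the single identity $d_H(u)+d_H(v)=4$ is routine, and recognising a connected $2$-regular induced subgraph of $L(G)$ as the line graph of a cycle or claw is standard; the genuine content is that the constraint $\tau=4$ on the edges leaving the $2$-regular set is exactly what upgrades an arbitrary cycle into a \emph{spanning} one, and that eliminating the $K_{1,3}$ exception rests essentially on $\tau>0$, that is, on $S$ being a proper subset of $V(L(G))$.
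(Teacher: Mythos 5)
Your proof is correct and follows essentially the same route as the paper's: the edges of a Hamilton cycle form the required $(2,4)$-regular set, and conversely a connected $2$-regular induced subgraph of $L(G)$ corresponds to a cycle of $G$ which the $\tau=4$ condition forces to be spanning. Yours is in fact more complete than the paper's proof, which merely asserts that the edges corresponding to $S$ ``form just one cycle'': you justify that step via the classical characterization of graphs whose line graph is a cycle and explicitly rule out the $K_{1,3}$ exception using the properness of $S$ (i.e.\ $\tau>0$), and you also make visible the one hypothesis both arguments tacitly need, namely that $G$ has no isolated vertices (which are invisible in $L(G)$).
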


\begin{proof}
Let $C$ be a Hamilton cycle of a graph $G$ and let $L(C)$ be the vertex subset of the line graph $L(G)$ corresponding to the edges in $C$.
Then it is immediate that $L(C)$ is a $(2,4)$-regular set of $L(G)$ inducing a connected subgraph.
Conversely, assume that the line graph $L(G)$ of a graph $G$ has a $(2,4)$-regular set $S$ inducing a connected subgraph. The edges of $G$
corresponding to $S$ form just one cycle $C$ in $G$. Furthermore, since each vertex not in $S$ has $4$ neighbors in $S$, the corresponding
edges in $G$ have both end-vertices in the cycle $C$. Therefore, $C$ is Hamiltonian.
\end{proof}

\begin{example}
The Figure~\ref{Hamiltonian} depicts a Hamiltonian graph $H$ and its line graph $L(H)$. The graph $H$ has a Hamiltonian cycle define by the
edge set $C=\{a, b, c, d, e, f, g\}$ and this edge set corresponds in $L(H)$ to a $(2,4)$-regular set inducing a connected subgraph.

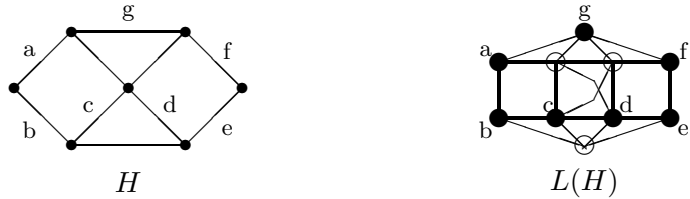
\begin{figure}[ht]
\begin{center}
\unitlength=0.5 mm
\begin{picture}(100,100)(10,70)
\put(-25,135){\circle*{3}} 
\put(-10,150){\circle*{3}} 
\put(20,150){\circle*{3}} 
\put(35,135){\circle*{3}} 
\put(20,120){\circle*{3}} 
\put(-10,120){\circle*{3}} 
\put(5,135){\circle*{3}} 
\put(-10,150){\line(-1,-1){15}} 
\put(-21,145){\makebox(0,0){\footnotesize a}}
\put(-10,120){\line(-1,1){15}} 
\put(-21,124){\makebox(0,0){\footnotesize b}}
\put(5,135){\line(-1,-1){15}}
\put(-5.5,130){\makebox(0,0){\footnotesize c}}
\put(5,135){\line(1,-1){15}}
\put(16.0,131){\makebox(0,0){\footnotesize d}}
\put(20,120){\line(1,1){15}} 
\put(31.0,124){\makebox(0,0){\footnotesize e}}
\put(20,150){\line(1,-1){15}} 
\put(31.0,145){\makebox(0,0){\footnotesize f}}
\put(-10,150){\line(1,0){30}} 
\put(5,155){\makebox(0,0){\footnotesize g}}
\put(-10,120){\line(1,0){30}} 
\put(5,135){\line(1,1){15}}
\put(5,135){\line(-1,1){15}}
\put(5,110){\makebox(0,0){$H$}}
%
%
\put(102.5,142){\circle*{5}}
\put(99,145){\makebox(0,0){\footnotesize a}}
\put(102.5,127){\circle*{5}}
\put(99,124){\makebox(0,0){\footnotesize b}}
\put(117.5,142){\circle{5}}
\put(117.5,142){\line(2,-1){10}}
\put(117.5,127){\circle*{5}}
\put(115.5,130){\makebox(0,0){\footnotesize c}}
\put(117.5,127){\line(2,1){10}}
\put(132.5,142){\circle{5}}
\put(132.5,142){\line(-1,-2){5}}
\put(132.5,127){\circle*{5}}
\put(136.0,131){\makebox(0,0){\footnotesize d}}
\put(132.5,127){\line(-1,2){5}}
\put(147.5,127){\circle*{5}}
\put(151.0,124){\makebox(0,0){\footnotesize e}}
\put(147.5,142){\circle*{5}}
\put(151.0,145){\makebox(0,0){\footnotesize f}}
\put(125,150){\circle*{5}}
\put(125,155){\makebox(0,0){\footnotesize g}}
\put(125,120){\circle{5}}
\linethickness{1.0pt}
\put(102.5,142){\line(1,0){45}}
\put(102.5,127){\line(1,0){45}}
\put(102.5,127){\line(0,1){15}}
\put(117.5,127){\line(0,1){15}}
\put(132.5,127){\line(0,1){15}}
\put(147.5,127){\line(0,1){15}}
\put(102.5,142){\line(3,1){23}}
\put(147.5,142){\line(-3,1){23}}
\put(102.5,127){\line(3,-1){23}}
\put(147.5,127){\line(-3,-1){23}}
\put(117.5,127){\line(1,-1){8}}
\put(117.5,142){\line(1,1){8}}
\put(132.5,142){\line(-1,1){8}}
\put(132.5,127){\line(-1,-1){8}}
\put(125,110){\makebox(0,0){$L(H)$}}
\end{picture}
\caption{A Hamiltonian graph $H$ and its line graph $L(H)$ with the $(2,4)$-regular set $S=\{a, b, c, d, e, f, g\}$.}\label{Hamiltonian}
\end{center}
\end{figure}
\end{example}

In \cite{ScirihaCardoso12} it was proved that a graph $G$ is Hamiltonian if and only the subdivision of $G$ (that is, a graph obtained from
$G$ after inserting a vertex in the middle of each edge) has a $(2,2)$-regular set inducing a connected subgraph.\\

Before to proceed, it is worth recall some domination concepts. Given a graph $G$ and a vertex $v \in V(G)$, $v$ \textit{dominates} itself
and all its neighbors. A vertex set $S \subset V(G)$ is \textit{dominating} if every vertex of $G$ is dominated by at least one vertex of $S$.
The \textit{domination number} of a graph $G$, $\gamma(G)$, is the cardinality of a dominating set in $G$ with minimum cardinality.
A \textit{dominating set} $S$ is \textit{efficient dominating} (also known as \textit{independent perfect dominating set}) if each vertex of
$G$ is dominated by precisely one vertex of $S$. Not every graph has an efficient dominating set (for example, $C_4$ has no efficient dominating
sets). The problem of determining an efficient dominating set in a graph (if there exists) is called the \textit{efficient dominating set
problem}. This problem is known to be ${\cal NP}$-complete for general graphs \cite{BBS88}. A closely related problem is that of determining
if $G$ has an \textit{efficient edge dominating} set, that is, a set $E$ of edges such that every edge of $G$ shares a vertex with precisely
one edge in $E$ (assuming that an edge shares a vertex with itself). This edge set is also known as a \textit{dominating induced matching problem}
and the problem of determining such edge set is also ${\cal NP}$-complete \cite{GSSH93}. An instance of a dominating induced matching can be
transformed into an instance of an efficient dominating set by associating to the input graph $G$ its line graph $L(G)$.

\begin{theorem}
A vertex subset $S$ of a graph $G$ is an efficient dominating set if and only if $S$ is $(0,1)$-regular.
\end{theorem}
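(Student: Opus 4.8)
The plan is to unwind both definitions and check that they coincide by examining each vertex $v \in V(G)$ separately, splitting into the two cases $v \in S$ and $v \notin S$. The crux is the domination convention recalled just before the statement: a vertex dominates itself together with all of its neighbors. Hence a vertex $v$ is dominated by exactly the vertices of $(\{v\} \cup N_G(v)) \cap S$, and the efficient dominating condition demands that this set be a singleton for every $v$. Everything then reduces to reading off when that singleton condition is equivalent to the pair of cardinality constraints $\kappa = 0$ and $\tau = 1$.

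First I would treat the forward direction, that an efficient dominating set is $(0,1)$-regular. For $v \in S$, the vertex $v$ already dominates itself, so to have a unique dominator inside $S$ no neighbor of $v$ may lie in $S$; this forces $|N_G(v) \cap S| = 0$. For $v \notin S$, the vertex $v$ contributes no self-domination from within $S$, so its unique dominator must be a neighbor lying in $S$, giving $|N_G(v) \cap S| = 1$. These are precisely the two defining conditions of a $(0,1)$-regular set.

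For the converse I would assume $S$ is $(0,1)$-regular and run the same case split in reverse. If $v \in S$, then $|N_G(v) \cap S| = 0$, so the only member of $S$ dominating $v$ is $v$ itself; if $v \notin S$, then $|N_G(v) \cap S| = 1$, so $v$ is dominated by exactly one vertex of $S$ and by no other. In both cases $v$ has a unique dominator in $S$, which is exactly the efficient dominating property.

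There is no substantive obstacle here: the statement is really a dictionary entry between two equivalent formulations, and the argument is a direct translation. The only point demanding care is consistent bookkeeping of the self-domination convention, since it is exactly this convention that pins down the pair $(\kappa,\tau) = (0,1)$ --- the value $\kappa = 0$ encoding independence of $S$ and the value $\tau = 1$ encoding a single external neighbor --- rather than any other pair.
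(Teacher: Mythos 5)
Your proof is correct and takes the same route as the paper, which simply observes that the result follows from unwinding the two definitions; your write-up just makes explicit the case analysis (self-domination for $v \in S$ forcing $\kappa = 0$, a unique external dominator for $v \notin S$ forcing $\tau = 1$) that the paper leaves implicit. No gaps.
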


\begin{proof}
Taking into account the definitions of a $(\kappa,\tau)$-regular set and efficient dominating set, the result follows.
\end{proof}

The $(\kappa,\tau)$-regular sets are also related with determination of vertex subsets of maximum cardinality inducing a $\kappa$-regular subgraph,
as it is highlighted by the next theorem.

\begin{theorem}\cite{CKL07}\label{Th_CKL}
Let $G$ be a graph of order $n$ and let $\tau=-\lambda_{n}(G)$. If $S \subset V(G)$ is $(\kappa,\kappa+\tau)$-regular, then $S$ is a maximum cardinality vertex
subset of $G$ inducing a $\kappa$-regular subgraph.
\end{theorem}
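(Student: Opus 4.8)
The plan is to read the maximality statement as an instance of the Cauchy--Schwarz inequality for the positive semidefinite form associated with the least-eigenvalue shift $B = \mathbf{A}_G - \lambda_n(G)I_n$. Since $\lambda_n(G)$ is the smallest eigenvalue of $\mathbf{A}_G$, the matrix $B$ is positive semidefinite, so $u^{\top}Bv$ is a semi-inner product and obeys $(u^{\top}Bv)^2 \le (u^{\top}Bu)(v^{\top}Bv)$. The whole argument will be a matter of evaluating this form on the characteristic vectors of $S$ and of a competing $\kappa$-regular induced subgraph.

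First I would translate the hypothesis into a linear-algebraic identity. Because $S$ is $(\kappa,\kappa+\tau)$-regular with $\tau=-\lambda_n(G)$, Theorem~\ref{kt_theorem} applied to the pair $(\kappa,\kappa+\tau)$ gives that $x=x(S)$ solves $(\mathbf{A}_G-(\kappa-(\kappa+\tau))I_n)x=(\kappa+\tau)\hat{\mathbf{e}}$; since $\kappa-(\kappa+\tau)=-\tau=\lambda_n(G)$, this is exactly $Bx=(\kappa+\tau)\hat{\mathbf{e}}$. Now let $T\subseteq V(G)$ be an arbitrary vertex subset inducing a $\kappa$-regular subgraph and set $y=x(T)$; the goal is $|T|\le |S|$. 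The induced subgraph on $T$ has $\kappa|T|/2$ edges, so $y^{\top}\mathbf{A}_Gy=\kappa|T|$ and therefore $y^{\top}By=(\kappa-\lambda_n(G))|T|=(\kappa+\tau)|T|$. Using the identity $Bx=(\kappa+\tau)\hat{\mathbf{e}}$ together with $\hat{\mathbf{e}}^{\top}y=|T|$ and $\hat{\mathbf{e}}^{\top}x=|S|$, I also obtain $y^{\top}Bx=(\kappa+\tau)|T|$ and $x^{\top}Bx=(\kappa+\tau)|S|$.

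Finally I would feed these three evaluations into Cauchy--Schwarz: from $(y^{\top}Bx)^2\le (y^{\top}By)(x^{\top}Bx)$ one gets $(\kappa+\tau)^2|T|^2\le (\kappa+\tau)^2|T|\,|S|$, and dividing by $(\kappa+\tau)^2|T|>0$ yields $|T|\le|S|$, with the case $T=\emptyset$ being trivial. The step that needs genuine care is not the chain of computations but justifying the final division: one must note that the standing convention $S\subsetneq V(G)$ forces the second regularity parameter to be positive, i.e.\ $\kappa+\tau=\kappa-\lambda_n(G)>0$, so that $B$ acts nondegenerately on $x$ and the quantities above do not collapse. Once that positivity is in hand, the only conceptual ingredient is the recognition that the extremal property of $S$ is precisely the equality case of Cauchy--Schwarz in the geometry defined by $B=\mathbf{A}_G-\lambda_n(G)I_n$.
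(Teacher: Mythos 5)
Your proof is correct and complete. Be aware, though, that the paper you were given does not actually contain a proof of Theorem~\ref{Th_CKL}: the theorem is stated with a citation to \cite{CKL07}, so the comparison has to be made with the original argument there, and that argument is genuinely different from yours. In \cite{CKL07} the result emerges from a convex quadratic programming bound in the tradition of Motzkin--Straus and of Luz's bound for the stability number: one shows that the optimal value of a convex program of the form $\max\{2\hat{\mathbf{e}}^{T}x-\frac{1}{\kappa+\tau}x^{T}(\mathbf{A}_G+\tau I_n)x \,:\, x\ge 0\}$ dominates the order of every induced $\kappa$-regular subgraph, and then that the characteristic vector of a $(\kappa,\kappa+\tau)$-regular set is an optimal solution with value $|S|$. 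Your argument strips away the optimization layer and keeps only its analytic core, the positive semidefiniteness of $B=\mathbf{A}_G-\lambda_n(G)I_n$: the identities $Bx=(\kappa+\tau)\hat{\mathbf{e}}$ (via Theorem~\ref{kt_theorem} applied with parameters $(\kappa,\kappa+\tau)$), $x^{T}Bx=(\kappa+\tau)|S|$, $y^{T}By=(\kappa+\tau)|T|$ and $y^{T}Bx=(\kappa+\tau)|T|$ are all verified correctly; the Cauchy--Schwarz inequality is indeed valid for a merely semidefinite form; and you rightly flag that the final division by $(\kappa+\tau)^{2}|T|$ needs $\kappa+\tau>0$, which follows from the paper's standing convention that a proper $(\kappa,\tau)$-regular set has positive second parameter (alternatively, from $\lambda_n(G)\le -1$ whenever $G$ has an edge). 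As for what each approach buys: the heavier QP formulation of \cite{CKL07} yields an upper bound $\upsilon_{\kappa}(G)$ defined for \emph{every} graph, together with a criterion for when it is attained, which is information beyond the statement at hand; your route buys brevity and self-containedness, and gives a free equality analysis --- if $|T|=|S|$ then equality holds in Cauchy--Schwarz, forcing $B\left(x(S)-x(T)\right)=0$, i.e. $x(S)-x(T)\in{\mathcal E}_G(\lambda_n(G))$, which dovetails nicely with Theorem~\ref{Th_CR07}.
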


\begin{example}
Consider the graph $G$ depicted in Figure~\ref{Figure_ckl} for which $\sigma(G)=\{-2^{[2]},0^{[3]},4^{[1]}\}$ and the vertex subset $S=\{1,3,4,6\}$ is $(2,4)$-regular.
Therefore, applying Theorem~\ref{Th_CKL}, we may conclude that $S$ is a maximum cardinality vertex subset inducing a $2$-regular subgraph.
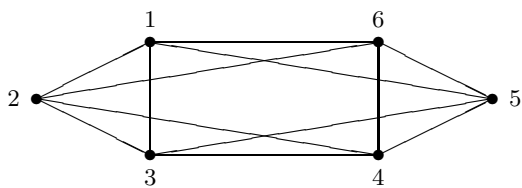
\begin{figure}[ht]
\begin{center}
\unitlength=0.3 mm
\begin{picture}(200,80)(150,70)
%
\put(200,135){\line(1,0){100}} 
\put(200,85){\line(1,0){100}}  
\put(150,110){\line(2,1){50}}  
\put(150,110){\line(2,-1){50}} 
\put(200,85){\line(0,1){50}} 
\put(300,135){\line(2,-1){50}} 
\put(300,85){\line(0,1){50}} 
\put(150,110){\line(6,1){150}} 
\put(150,110){\line(6,-1){150}} 
\put(350,110){\line(-6,1){150}} 
\put(350,110){\line(-6,-1){150}} 
\put(300,85){\line(2,1){50}}
%
\put(150,110){\circle*{4.7}} 
\put(350,110){\circle*{4.7}} 
\put(200,135){\circle*{4.7}} 
\put(300,135){\circle*{4.7}} 
\put(200,85){\circle*{4.7}} 
\put(300,85){\circle*{4.7}} 
%
%
\put(140,110){\makebox(0,0){\footnotesize 2}}
\put(360,110){\makebox(0,0){\footnotesize 5}}
\put(200,145){\makebox(0,0){\footnotesize 1}}
\put(300,145){\makebox(0,0){\footnotesize 6}}
\put(200,75){\makebox(0,0){\footnotesize 3}}
\put(300,75){\makebox(0,0){\footnotesize 4}}
\end{picture}
\caption{Graph $G$ with the $(2,4)$-regular set $S=\{1,3,4,6\}$ and $\sigma(G)=\{-2^{[2]},0^{[3]},4^{[1]}\}$.}\label{Figure_ckl}
\end{center}
\end{figure}
\end{example}
A \textit{strongly regular graph} with parameters $(n,p,a,c)$ is a $p$-regular graph of order $n$, where each pair of vertices have $a$ common
neighbors if they are adjacent and $c$ common neighbors otherwise. For instance, the Petersen graph depicted in Figure~\ref{PetersenGraph} is a
strongly regular graph with parameters $(10,3,0,1)$ and the graph depicted in Figure~\ref{Figure_ckl} is a strongly regular graph with parameters
$(6,4,2,4)$. A strongly regular graph $G$ is \textit{primitive} if $G$ and the complement graph of $G$
are both connected; otherwise it is called \textit{imprimitive}. A strongly regular graph with parameters $(n,p,a,c)$ is imprimive if and only
if $c=p$ or $c=0$ (see \cite[p. 178]{godsil93}. The graph depicted in Figure~\ref{Figure_ckl} is an example of an imprimitive strongly regular
graph.\\

As was noticed in \cite{CardosoDelormeRama07}, if $S$ is a maximum stable set of a primitive strongly regular graph $G$ with parameters $(n,p,a,c)$,
then  for all $k$ in $S$
$$
|S| = \sum_{j \in N_G(k)}{\frac{|N_G(j) \cap S|}{c}} - \frac{p-c}{c}.
$$
Furthermore, if $S$ is $(0,\tau)$-regular, then
$$
|S| = \frac{p(\tau-1)+c}{c}.
$$

The next theorem gives a necessary and sufficient condition for a regular graph be strongly regular, based on the existence of $(\kappa,\tau)$-regular
sets.

\begin{theorem}\cite{CSZ2010}\label{Th-srg}
A $p$-regular graph $G$ is strongly regular with parameters $(n,p,a,c)$ if and only if for every vertex $v \in V(G)$, the
vertex subset $S=N_G(v)$ is $(a,c)$-regular in $G-v$ (the graph obtained from $G$ after the deletion of the vertex $v$).
\end{theorem}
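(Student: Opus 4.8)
The plan is to reduce both implications to a single dictionary between the two regularity conditions defining an $(a,c)$-regular set in $G-v$ and the two common-neighbour conditions defining a strongly regular graph. Fix a vertex $v \in V(G)$ and set $S = N_G(v)$; since $v \notin N_G(v)$ we have $S \subseteq V(G-v)$, and $G$ being $p$-regular gives $|S| = p$. The first step I would carry out is the key observation that deleting $v$ does not disturb any of the adjacencies that matter: for any vertex $x \in V(G-v)$ no edge counted in $N_G(x) \cap S$ is incident with $v$ (both endpoints lie in $S$, and $v \notin S$), so
\[
N_{G-v}(x) \cap S = N_G(x) \cap N_G(v),
\]
which is exactly the set of common neighbours of $x$ and $v$. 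Thus $|N_{G-v}(x)\cap S|$ always counts the common neighbours of $x$ and $v$.

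Next I would record how the partition of $V(G-v)$ into $S$ and $V(G-v)\setminus S$ corresponds to adjacency with $v$: a vertex $x \neq v$ lies in $S$ precisely when it is adjacent to $v$, and lies in $V(G-v)\setminus S$ precisely when it is non-adjacent to $v$. Combining this with the displayed identity, the statement ``$S$ is $(a,c)$-regular in $G-v$'' unfolds into exactly two assertions, namely that every neighbour $w$ of $v$ satisfies $|N_G(w)\cap N_G(v)| = a$ and every non-neighbour $u$ of $v$ satisfies $|N_G(u)\cap N_G(v)| = c$.

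With this dictionary in hand both directions are immediate. For the forward implication I would assume $G$ is strongly regular with parameters $(n,p,a,c)$; then for the fixed $v$ every adjacent pair $(v,w)$ has $a$ common neighbours and every non-adjacent pair $(v,u)$ has $c$ common neighbours, which by the dictionary says precisely that $S=N_G(v)$ is $(a,c)$-regular in $G-v$, and since $v$ was arbitrary this holds for all vertices. For the converse I would assume $N_G(v)$ is $(a,c)$-regular in $G-v$ for every $v$; reading the two assertions above at each $v$ and letting the unordered pairs $\{v,w\}$ and $\{v,u\}$ range over all adjacent and all non-adjacent pairs of $G$ shows that adjacent vertices always share $a$ common neighbours and non-adjacent vertices always share $c$, which together with the standing hypotheses that $G$ is $p$-regular of order $n$ is exactly the definition of a strongly regular graph with parameters $(n,p,a,c)$.

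I do not expect a genuine obstacle here, since the result is essentially a change of viewpoint; the only point demanding care is the bookkeeping in the first step, namely checking that the removal of $v$ leaves the relevant common-neighbour counts untouched (because $v$ is never a common neighbour of a pair containing $v$, and $v \notin S$). A minor caveat worth flagging is the degenerate situation: when $c=0$ (disjoint unions of cliques) or $p=n-1$ (complete graphs) the set $S$ falls outside the standing convention $\tau>0$, so the equivalence should be read with the convention that $V(H)$ itself counts as a $(\kappa,0)$-regular set of a $\kappa$-regular graph $H$.
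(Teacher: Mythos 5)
Your proof is correct. Note that the paper itself offers no proof of this theorem---it is quoted from \cite{CSZ2010}---so there is no internal argument to compare against; your write-up supplies the missing details, and it is the natural argument. The key identity $N_{G-v}(x)\cap S = N_G(x)\cap N_G(v)$ (valid precisely because $v\notin S=N_G(v)$, so deleting $v$ removes no edges with an endpoint in $S$ other than those at $v$ itself) turns the two clauses in the definition of an $(a,c)$-regular set of $G-v$ into exactly the two common-neighbour conditions of strong regularity for pairs containing $v$, and quantifying over all $v$ covers every adjacent and every non-adjacent pair, giving both implications at once. Your closing caveat is also apt: under the paper's standing convention that a $(\kappa,\tau)$-regular set properly contained in the vertex set must have $\tau>0$, the statement needs a charitable reading in the imprimitive case $c=0$ (disjoint unions of cliques, where $N_G(v)$ is a proper $(a,0)$-regular set) and for complete graphs (where $N_G(v)=V(G-v)$), consistent with the paper's convention that the full vertex set of a $\kappa$-regular graph counts as $(\kappa,0)$-regular.
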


\begin{example}
Consider the strongly regular graph $G$ with parameters $(10,3,0,1)$ depicted in Figure~\ref{PetersenGraph}. Deleting the vertex $1$
(for instance), the vertex subset $N_G(1)=\{5, 7, 9\}$ is $(0,1)$-regular in the graph $G-1$. A similar $(0,1)$-regular set is
determined deleting any other vertex of $G$.
\end{example}

\section{Determination of $(\kappa,\tau)$-regular sets}

As a consequence of the results obtained in \cite{halldorsson_et_al00}, in general, the recognition of graphs with a $(\kappa,\tau)$-regular
set is ${\cal NP}$-complete. However, there are families of graphs for which such recognition and the determination of $(\kappa,\tau)$-regular
sets can be done in polynomial time, in particular for the graphs whose maximum multiplicity of the eigenvalues is small. This section is
devoted to the results and algorithmic techniques developed for the determination of $(\kappa,\tau)$-regular sets.

The next theorem is a variant of a theorem which appears in \cite{CLLP2016}.

\begin{theorem}
Let $G$ be a graph with at least one edge and let $\overline{\mathbf{x}}$ be a particular solution of the linear system \eqref{kt_equation}.
The graph $G$ has a $(\kappa,\tau)$-regular set if and only if there is a $0-1$ vector $\mathbf{x}$ such that
\begin{equation}
\mathbf{x} = \overline{\mathbf{x}} + \hat{\textbf{u}}, \label{kt_solution}
\end{equation}
where $\hat{\textbf{u}}=\textbf{0}$ if $\kappa-\tau$ is not an eigenvalue of $G$ and $\hat{\textbf{u}} \in {\mathcal E}_G(\kappa-\tau)$ otherwise.
Furthermore, every $0-1$ solution $\mathbf{x}$ in \eqref{kt_solution} is the characteristic vector of a $(\kappa,\tau)$-regular set $S \subset V(G)$.
\end{theorem}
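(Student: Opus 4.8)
The plan is to reduce the statement to a description of the full solution set of the linear system \eqref{kt_equation} and then intersect it with the unit cube $\{0,1\}^{V(G)}$, invoking Theorem~\ref{kt_theorem} to translate between $0-1$ solutions and characteristic vectors of $(\kappa,\tau)$-regular sets. By Theorem~\ref{kt_theorem}, $G$ has a $(\kappa,\tau)$-regular set if and only if \eqref{kt_equation} admits a $0-1$ solution, and every such solution is the characteristic vector of one. So it suffices to show that the $0-1$ solutions of \eqref{kt_equation} are exactly the $0-1$ vectors expressible in the form \eqref{kt_solution}.

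First I would recall the standard fact from linear algebra that, given a fixed particular solution $\overline{\mathbf{x}}$ of the (consistent) system $\left(\mathbf{A}_G-(\kappa-\tau)I_n\right)x=\tau\hat{\mathbf{e}}$, the complete solution set is the affine subspace $\overline{\mathbf{x}}+\ker\left(\mathbf{A}_G-(\kappa-\tau)I_n\right)$. The key observation is that the kernel of $\mathbf{A}_G-(\kappa-\tau)I_n$ is precisely the eigenspace ${\mathcal E}_G(\kappa-\tau)$ when $\kappa-\tau$ is an eigenvalue of $G$, and is the zero subspace $\{\mathbf{0}\}$ otherwise. This is exactly the case distinction built into the definition of $\hat{\mathbf{u}}$ in the statement: $\hat{\mathbf{u}}=\mathbf{0}$ when $\kappa-\tau\notin\sigma(G)$, and $\hat{\mathbf{u}}\in{\mathcal E}_G(\kappa-\tau)$ otherwise. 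Hence the general solution of \eqref{kt_equation} is exactly $\mathbf{x}=\overline{\mathbf{x}}+\hat{\mathbf{u}}$, which is the form \eqref{kt_solution}.

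With this in hand the proof assembles quickly. For the forward direction, suppose $G$ has a $(\kappa,\tau)$-regular set $S$; by Theorem~\ref{kt_theorem} its characteristic vector $x(S)$ is a $0-1$ solution of \eqref{kt_equation}, so by the kernel description it can be written as $\overline{\mathbf{x}}+\hat{\mathbf{u}}$ with $\hat{\mathbf{u}}$ of the required type, giving a $0-1$ vector of the form \eqref{kt_solution}. For the converse, if $\mathbf{x}=\overline{\mathbf{x}}+\hat{\mathbf{u}}$ is a $0-1$ vector with $\hat{\mathbf{u}}$ of the stated type, then $\mathbf{x}$ lies in the solution set of \eqref{kt_equation}, and being $0-1$ it is by Theorem~\ref{kt_theorem} the characteristic vector of a $(\kappa,\tau)$-regular set. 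The same reasoning establishes the final assertion that every $0-1$ vector of the form \eqref{kt_solution} is such a characteristic vector.

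The only genuinely delicate point, and the one I would be careful to address, is the consistency of the system: the form \eqref{kt_solution} presupposes that a particular solution $\overline{\mathbf{x}}$ actually exists. The hypothesis that $G$ has at least one edge (so $\tau\hat{\mathbf{e}}\ne\mathbf{0}$ in a meaningful way and the trivial $\kappa$-regular degeneracies are excluded) should be used here, together with the implicit assumption that the theorem is applied in a regime where \eqref{kt_equation} is consistent; I would note that if the system is inconsistent then no $0-1$ solution exists and $G$ has no $(\kappa,\tau)$-regular set, so the biconditional holds vacuously on both sides. Beyond this consistency check the argument is essentially a direct translation of Theorem~\ref{kt_theorem} through the elementary affine-subspace structure of the solution set, so I expect no further obstacles.
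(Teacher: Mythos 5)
Your proof is correct and takes essentially the same route as the paper's own (much terser) proof, which simply observes that every solution of \eqref{kt_equation} is of the form \eqref{kt_solution} — a particular solution plus an element of $\ker\left(\mathbf{A}_G-(\kappa-\tau)I_n\right)$, which equals ${\mathcal E}_G(\kappa-\tau)$ or $\{\mathbf{0}\}$ according to whether $\kappa-\tau$ is an eigenvalue — and then invokes Theorem~\ref{kt_theorem}, exactly as you do. Your closing remark about consistency is harmless but unnecessary, since the existence of the particular solution $\overline{\mathbf{x}}$ is already part of the theorem's hypothesis.
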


\begin{proof}
It is immediate that every solution of the linear system \eqref{kt_equation} can be obtained from the equation \eqref{kt_solution}.
Therefore, applying Theorem~\ref{kt_theorem}, the result follows.
\end{proof}

\begin{corollary}\cite{CLLP2016}\label{kt_size}
If a graph $G$ has a $(\kappa,\tau)$-regular set $S \subseteq V(G)$ and $\overline{\mathbf{x}}$ is a particular solution of the linear system
\eqref{kt_equation}, then $|S| = \hat{\mathbf{e}}^T\overline{\mathbf{x}}$.
\end{corollary}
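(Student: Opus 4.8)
The plan is to compare the given particular solution $\overline{\mathbf{x}}$ with the characteristic vector of the prescribed $(\kappa,\tau)$-regular set $S$, and to show that the two coincide after summing coordinates. First I would invoke Theorem~\ref{kt_theorem}: since $G$ possesses the $(\kappa,\tau)$-regular set $S$, its characteristic vector $x(S)$ is a $0$--$1$ solution of \eqref{kt_equation}, and by the meaning of a characteristic vector one has $\hat{\mathbf{e}}^T x(S) = |S|$. It therefore suffices to prove that $\hat{\mathbf{e}}^T\overline{\mathbf{x}} = \hat{\mathbf{e}}^T x(S)$.

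Next I would examine the difference of the two solutions. As both $\overline{\mathbf{x}}$ and $x(S)$ satisfy \eqref{kt_equation}, their difference $\hat{\mathbf{u}} := \overline{\mathbf{x}} - x(S)$ is annihilated by $\mathbf{A}_G - (\kappa-\tau)I_n$, so $\hat{\mathbf{u}}$ lies in the kernel of that matrix, which is precisely the eigenspace $\mathcal{E}_G(\kappa-\tau)$ (with $\hat{\mathbf{u}} = \mathbf{0}$ when $\kappa-\tau$ is not an eigenvalue of $G$). The whole statement thus collapses to the single orthogonality assertion $\hat{\mathbf{e}}^T\hat{\mathbf{u}} = 0$ for $\hat{\mathbf{u}} \in \mathcal{E}_G(\kappa-\tau)$.

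This orthogonality is the crux, and it is where the symmetry of the adjacency matrix together with the standing hypothesis $\tau>0$ come in. I would multiply \eqref{kt_equation} on the left by $\hat{\mathbf{u}}^T$, obtaining $\hat{\mathbf{u}}^T(\mathbf{A}_G-(\kappa-\tau)I_n)\overline{\mathbf{x}} = \tau\,\hat{\mathbf{u}}^T\hat{\mathbf{e}}$. Since $\mathbf{A}_G$ is symmetric and $(\mathbf{A}_G-(\kappa-\tau)I_n)\hat{\mathbf{u}} = \mathbf{0}$, the left-hand side equals $\big((\mathbf{A}_G-(\kappa-\tau)I_n)\hat{\mathbf{u}}\big)^T\overline{\mathbf{x}} = 0$, whence $\tau\,\hat{\mathbf{u}}^T\hat{\mathbf{e}} = 0$ and, dividing by $\tau>0$, $\hat{\mathbf{e}}^T\hat{\mathbf{u}} = 0$. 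Equivalently, the mere consistency of \eqref{kt_equation} forces its right-hand side $\tau\hat{\mathbf{e}}$ into the range of the symmetric matrix $\mathbf{A}_G-(\kappa-\tau)I_n$, which is the orthogonal complement of its kernel $\mathcal{E}_G(\kappa-\tau)$.

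Finally I would assemble the pieces: $\hat{\mathbf{e}}^T\overline{\mathbf{x}} = \hat{\mathbf{e}}^T x(S) + \hat{\mathbf{e}}^T\hat{\mathbf{u}} = |S| + 0 = |S|$, as claimed. The only delicate point is the orthogonality step; everything else is bookkeeping. I expect the main obstacle to be recognising that one should test \eqref{kt_equation} against the eigenvector $\hat{\mathbf{u}}$ and exploit $\mathbf{A}_G = \mathbf{A}_G^T$, rather than attempting to describe all solutions of the system explicitly.
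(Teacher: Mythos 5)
Your proof is correct. Note first that the paper offers no written proof of this corollary at all: it is presented as an immediate consequence of the preceding theorem, which writes every solution of \eqref{kt_equation} as $\overline{\mathbf{x}} + \hat{\mathbf{u}}$ with $\hat{\mathbf{u}} \in {\mathcal E}_G(\kappa-\tau)$ (or $\hat{\mathbf{u}}=\mathbf{0}$), and is otherwise imported from \cite{CLLP2016}. What that deduction silently requires is exactly the point you isolate: that $\hat{\mathbf{e}}^T\hat{\mathbf{u}}=0$ for every $\hat{\mathbf{u}}$ in the kernel of $\mathbf{A}_G-(\kappa-\tau)I_n$. In the paper's own framework this orthogonality would be justified by Theorem~\ref{Th_CSZ} (the existence of a $(\kappa,\tau)$-regular set forces $\kappa-\tau$ to be non-main, i.e.\ ${\mathcal E}_G(\kappa-\tau)\perp\hat{\mathbf{e}}$), a result quoted from \cite{CSZ2010} and appearing only in the later section on spectral properties. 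You instead prove the orthogonality on the spot, by testing \eqref{kt_equation} against $\hat{\mathbf{u}}$ and using the symmetry of $\mathbf{A}_G$ together with $\tau>0$ --- in effect the Fredholm alternative: consistency of the system puts $\tau\hat{\mathbf{e}}$ in the range of the symmetric matrix $\mathbf{A}_G-(\kappa-\tau)I_n$, hence orthogonal to its kernel. This buys a self-contained, elementary proof that does not depend on the non-main eigenvalue machinery, and it simultaneously re-proves the relevant direction of Theorem~\ref{Th_CSZ}.

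One remark on hypotheses: your division by $\tau$ is precisely where the standing assumption $\tau>0$ enters, and it is essential. For the conventional case $S=V(G)$ of a $\kappa$-regular graph (permitted by the corollary's ``$S\subseteq V(G)$'' with $\tau=0$), the system \eqref{kt_equation} becomes homogeneous, $\overline{\mathbf{x}}=\mathbf{0}$ is a legitimate particular solution, and the conclusion $|S|=\hat{\mathbf{e}}^T\overline{\mathbf{x}}$ fails; so the corollary is really a statement about $\tau>0$, exactly as your argument uses.
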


Note that the determination of a $(\kappa,\tau)$-regular set of a graph $G$ (if there exists) or the conclusion that such vertex subset does
not exists is easy to check, when $\kappa - \tau$ is not an eigenvalue of $G$. Indeed, in such a case the linear system \eqref{kt_equation}
has an unique solution which if it is $0-1$, is the characteristic vector of the unique $(\kappa,\tau)$-regular set of $G$;
otherwise there is no $(\kappa,\tau)$-regular set in $G$. Furthermore, from Corollary~\ref{kt_size}, considering a particular solution of the
linear system \eqref{kt_equation}, $\overline{\mathbf{x}}$, if $\hat{e}^T\overline{\mathbf{x}}$ is not a positive integer, then $G$ has no
$(\kappa,\tau)$-regular sets.

\begin{theorem}\label{kt_determination}\cite{CLP2018}
Let $G$ be a graph with a $(\kappa,\tau)$-regular set $S \subset V(G)$ and let $\overline{\mathbf{x}}$ be a particular solution of the
linear system \eqref{kt_equation}. Assuming that $\kappa-\tau$ is an eigenvalue of $G$ with multiplicity $t$, then the characteristic vector
of $S$, $\mathbf{x}$, is determined by the equality
\begin{equation}
\mathbf{x} = \overline{\mathbf{x}} + \sum_{j=1}^t{\delta_{i_j} \hat{\mathbf{v}}_j},\label{main_equality}
\end{equation}
where $\delta_{i_j} \in \{-\overline{\mathbf{x}}_{i_j}, 1-\overline{\mathbf{x}}_{i_j}\}$, for $j=1, \dots, t$, the vectors
$\hat{\mathbf{v}}_1, \dots, \hat{\mathbf{v}}_t \in {\mathcal E}_G(\kappa - \tau)$ and are such that the matrix $V$ whose columns are
$\hat{\mathbf{v}}_1, \dots, \hat{\mathbf{v}}_t$ has a $t \times t$ identity submatrix defined by the $t$ rows of $V$ with indices
in $I=\{i_1, i_2, \ldots, i_t\} \subset V(G)$.
\end{theorem}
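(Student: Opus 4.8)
The plan is to make the general solution form of the preceding theorem completely explicit by choosing a convenient basis of the eigenspace. Since $S$ is a $(\kappa,\tau)$-regular set, Theorem~\ref{kt_theorem} guarantees that its characteristic vector $\mathbf{x}=x(S)$ is a $0$--$1$ solution of \eqref{kt_equation}. The homogeneous part of \eqref{kt_equation} is the kernel of $\mathbf{A}_G-(\kappa-\tau)I_n$, which is exactly the eigenspace ${\mathcal E}_G(\kappa-\tau)$; as $\kappa-\tau$ has multiplicity $t$, this space has dimension $t$. Consequently, once a particular solution $\overline{\mathbf{x}}$ is fixed, every solution, and in particular $\mathbf{x}$, is of the form $\overline{\mathbf{x}}+\hat{\mathbf{u}}$ for a unique $\hat{\mathbf{u}}\in{\mathcal E}_G(\kappa-\tau)$.

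The key construction is to produce the basis $\hat{\mathbf{v}}_1,\dots,\hat{\mathbf{v}}_t$ with the stated identity-submatrix property. Starting from any $n\times t$ matrix whose columns form a basis of ${\mathcal E}_G(\kappa-\tau)$, this matrix has full column rank $t$, so it possesses $t$ linearly independent rows; collecting their indices into $I=\{i_1,\dots,i_t\}$ yields an invertible $t\times t$ submatrix. Right-multiplying the matrix by the inverse of that submatrix performs a change of basis among the columns, keeping the column space equal to ${\mathcal E}_G(\kappa-\tau)$, and produces the matrix $V$ whose rows indexed by $I$ form $I_t$; equivalently the columns $\hat{\mathbf{v}}_1,\dots,\hat{\mathbf{v}}_t$ satisfy $(\hat{\mathbf{v}}_j)_{i_k}=1$ when $j=k$ and $(\hat{\mathbf{v}}_j)_{i_k}=0$ otherwise. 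I expect this normalization step to be the only nontrivial point; it is a standard full-column-rank argument, but it is precisely what makes the coefficients decouple.

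Finally I would determine the coefficients. Writing $\hat{\mathbf{u}}=\sum_{j=1}^t \delta_{i_j}\hat{\mathbf{v}}_j$ and reading off the $i_k$-th component of the identity $\mathbf{x}=\overline{\mathbf{x}}+\hat{\mathbf{u}}$, the identity-submatrix property annihilates every term except $j=k$, leaving $\mathbf{x}_{i_k}=\overline{\mathbf{x}}_{i_k}+\delta_{i_k}$, so that $\delta_{i_k}=\mathbf{x}_{i_k}-\overline{\mathbf{x}}_{i_k}$. Because $\mathbf{x}$ is the characteristic vector of $S$ we have $\mathbf{x}_{i_k}\in\{0,1\}$, which pins each coefficient to $\delta_{i_k}\in\{-\overline{\mathbf{x}}_{i_k},\,1-\overline{\mathbf{x}}_{i_k}\}$ and yields \eqref{main_equality}. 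The practical payoff, worth recording, is that these $t$ binary choices produce at most $2^t$ candidate vectors to test, so the search reduces to a finite procedure.
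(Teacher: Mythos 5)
Your proposal is correct and follows essentially the same route as the paper: take a basis of ${\mathcal E}_G(\kappa-\tau)$, use full column rank to extract an invertible $t\times t$ row submatrix, normalize by a change of basis (the paper phrases your right-multiplication by the inverse as replacing columns by linear combinations) so that the rows indexed by $I$ form $I_t$, and then read off each $\delta_{i_j}=\mathbf{x}_{i_j}-\overline{\mathbf{x}}_{i_j}\in\{-\overline{\mathbf{x}}_{i_j},1-\overline{\mathbf{x}}_{i_j}\}$ componentwise. Your write-up is, if anything, slightly more explicit than the paper's about why every solution of \eqref{kt_equation} decomposes as $\overline{\mathbf{x}}+\hat{\mathbf{u}}$ with $\hat{\mathbf{u}}$ in the eigenspace.
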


\begin{proof}
Let $U$ be a matrix whose columns are $t$ linear independent eigenvectors, $\hat{\mathbf{u}}_1, \dots, \hat{\mathbf{u}}_t$, associated to the
eigenvalue $\kappa-\tau$. Since these vectors are linear independent, the matrix $U$ has a subset of $t$ rows indexed by $i_1, \dots, i_t$,
defining a nonsingular $t \times t$ submatrix $T$. Then, we can replace each column of $U$ by a linear combination of columns of $U$ to obtain
a matrix $V$ whose columns $\hat{\mathbf{v}}_1, \dots, \hat{\mathbf{v}}_t$ remain as linear independent eigenvectors associated to $\kappa-\tau$
and the corresponding $t \times t$ submatrix $T'$ of $V$ defines the identity matrix of order $t$. Therefore, considering that $\mathbf{x}$ is the
characteristic vector of $S$, from the system \eqref{kt_solution}, where $\mathbf{u}$ is replaced by $\sum_{j=1}^t{\delta_{i_j}\textbf{v}_j}$,
it follows that
\begin{eqnarray*}
\mathbf{x}_{i_1} = \overline{\mathbf{x}}_{i_1} + \delta_{i_1} & \Leftrightarrow & \delta_{i_1} = \mathbf{x}_{i_1} - \overline{\mathbf{x}}_{i_1} \\
\mathbf{x}_{i_2} = \overline{\mathbf{x}}_{i_2} + \delta_{i_2} & \Leftrightarrow & \delta_{i_2} = \mathbf{x}_{i_2} - \overline{\mathbf{x}}_{i_2} \\
\vdots \qquad \qquad \;                                                & \vdots          & \qquad \vdots \\\
\mathbf{x}_{i_t} = \overline{\mathbf{x}}_{i_t} + \delta_{i_t} & \Leftrightarrow & \delta_{i_t} = \mathbf{x}_{i_t} - \overline{\mathbf{x}}_{i_t}.
\end{eqnarray*}
Since $\mathbf{x}_{i_j} \in \{0,1\}$, for $j=1, \dots, t$, the result follows.
\end{proof}

As immediate consequence of Theorem~\ref{kt_determination}, the Algorithm 1 decides in a finite number of steps whether or not a graph $G$, having
an eigenvalue $\kappa-\tau$ with multiplicity $t$, has a $(\kappa,\tau)$-regular set, determining such vertex subset when it exists.

\begin{algorithm}[H]%
\begin{algorithmic}[1]
\REQUIRE{$\mathbf{A}_G$, $\kappa$,  $\tau$, $t$ and the $n \times t$ matrix $U$ whose columns are linear
         independent vectors of $\mathcal{E}_{G}(\kappa-\tau)$ when $t>0$.}
\ENSURE{a $(\kappa,\tau)$-regular set $S$ or the conclusion that it does not exists.}
\STATE \textbf{COMPUTE} a particular solution $\overline{\mathbf{x}}$ of the linear system \eqref{kt_equation}.
\STATE \textbf{SET} $\mathbf{x} := \overline{\mathbf{x}}$.
\STATE \textbf{IF} $t=0$, \textbf{THEN} \textbf{GOTO} Step~\ref{xyz}
\STATE \textbf{DETERMINE} the matrix $V$, with columns $\hat{\mathbf{v}}_1, \ldots, \hat{\mathbf{v}}_t$, obtained
       from $U$ as in Theorem~\ref{kt_determination} and the index subset $I=\{i_1, \dots i_t\} \subset \{1, \dots, n\}$.\label{abc}
\STATE \textbf{SET} $\Lambda := \{(\delta_{i_1}, \ldots, \delta_{i_t}): \delta_{i_j} \in \{-\overline{\mathbf{x}}_{i_j},1-\overline{\mathbf{x}}_{i_j}\}, i_j \in I\}$.\label{uvw}
\STATE \textbf{WHILE} $\mathbf{x}$ is not $0-1$ and $\Lambda \ne \emptyset$
\STATE \qquad \textbf{CHOOSE} $(\delta_{i_1}, \ldots, \delta_{i_t}) \in \Lambda$ and \textbf{SET} $\Lambda := \Lambda \setminus \{(\delta_{i_1}, \ldots, \delta_{i_t})\}$.
\STATE \qquad \textbf{IF} $\overline{\mathbf{x}} + \sum_{j=1}^{t}{\delta_{i_j}\hat{\mathbf{v}}_j}$ is a $0-1$ vector \textbf{THEN}
               \textbf{SET} $\mathbf{x} := \overline{\mathbf{x}} + \sum_{j=1}^{t}{\delta_{i_j}\hat{\mathbf{v}}_j}$.
\STATE \qquad \textbf{END If}
\STATE \textbf{END WHILE}
\STATE \textbf{IF} $\mathbf{x}$ is $0-1$ \textbf{THEN} return $\mathbf{x}$ as the characteristic vector of $S$.\label{xyz}
\STATE \qquad \qquad \qquad \textbf{ELSE} $G$ has no $(\kappa,\tau)$-regular sets.
\STATE \textbf{END IF}\\
\end{algorithmic}%
\caption{for the recognition whether a graph $G$ has a $(\kappa,\tau)$ regular set and its determination when it exists.\label{algorithm}}%
\end{algorithm}%

\begin{example}
Let us apply Algorithm 1 repeatedly (updating $\Lambda$ in each run of step~\ref{uvw}, removing the t-tuples already determined which are
0-1 solutions) to the determination of all $(0,2)$-regular sets $S_1$ and all $(1,3)$-regular sets $S_2$ in the graph $G$ depicted in
Figure~\ref{PetersenGraph}. Note that the adjacency matrix $\textbf{A}_G$ has three distinct eigenvalues: $3$ with multiplicity $1$, $1$
with multiplicity $5$ and $-2$ with multiplicity $4$. Since $0-2=1-3=-2$, in both cases we can consider the matrix
$$
U^T = \left(\begin{array}{rrrrrrrrrr}
                   -1 &-1 &-1 & 0 & 1 & 0 & 1 & 0 & 0 & 1 \\
                   -3 &-1 &-1 &-1 & 2 & 0 & 2 & 0 & 2 & 0 \\
                    1 &-1 & 1 &-1 & 0 & 0 &-2 & 2 & 0 & 0 \\
                    1 & 1 &-1 &-1 &-2 & 2 & 0 & 0 & 0 & 0 \\
            \end{array}\right),
$$
where the rows of $U^T$ are linear independent eigenvectors belonging to ${\mathcal E}_G(\kappa-\tau)$. The matrix $V$ obtained from
the matrix $U$ in step~\ref{abc} can take the form
$$
V = \left(\begin{array}{rrrr}
           1 & 0 & 0 & 0 \\
           0 & 1 & 0 & 0 \\
           0 & 0 & 1 & 0 \\
           0 & 0 & 0 & 1 \\
           -2/3 &-2/3 & 1/3 & 1/3 \\
            1/3 & 1/3 &-2/3 &-2/3 \\
           -2/3 & 1/3 &-2/3 & 1/3 \\
            1/3 &-2/3 & 1/3 &-2/3 \\
           -2/3 & 1/3 & 1/3 &-2/3 \\
            1/3 &-2/3 &-2/3 & 1/3
          \end{array}\right)
$$
and then $I=\{1,2,3,4\}$. For each case, consider a particular solution $\overline{\textbf{x}}$ of the linear system \eqref{kt_equation}.
\begin{enumerate}
\item For $(\kappa,\tau)=(0,2)$, using the particular solution of \eqref{kt_equation} $\overline{\textbf{x}}=\frac{2}{5}\hat{e}$,
      by Corollary~\ref{kt_size} $|S_1|=4$ and by Theorem~\ref{kt_determination} $\delta_1, \delta_2,\delta_3,\delta_4 \in \{-\frac{2}{5},\frac{3}{5}\}$.
      The $0-1$ solutions $\mathbf{x}$ in \eqref{main_equality} (and consequently the characteristic vectors of the $(0,2)$-regular sets)
      are obtained for each tuple $(\delta_1, \delta_2, \delta_3, \delta_4)$ of the table (in each row, $\delta_1, \dots, \delta_4$ appear
      in the first $4$ entries and the corresponding vertices of $S_1$ appear in the last $4$ entries):
      {\begin{center}\footnotesize
       \begin{tabular}{|r|r|r|r|cccc|}\hline
          $\delta_1$   & $\delta_2$   &  $\delta_3$  & $\delta_4$  &   &   &   &   \\ \hline
         $-2/5$ & $-2/5$ & $-2/5$ & $3/5$  & 4 & 5 & 7 & 10\\
         $-2/5$ & $-2/5$ & $3/5$  & $-2/5$ & 3 & 5 & 8 & 9 \\
         $-2/5$ & $3/5$  & $-2/5$ & $3/5$  & 2 & 6 & 7 & 9 \\
         $3/5$  & $-2/5$ & $-2/5$ & $-2/5$ & 1 & 6 & 8 & 10\\
         $3/5$  & $3/5$  & $3/5$  & $3/5$  & 1 & 2 & 3 & 4 \\ \hline
       \end{tabular}
      \end{center}}
\item For $(\kappa,\tau)=(1,3)$, using the particular solution of \eqref{kt_equation} $\overline{\textbf{x}}^T=(3/2,3/2,3/2,3/2,0,0,0,0,0,0)$,
      by Corollary~\ref{kt_size} $|S_2|=6$ and by Theorem~\ref{kt_determination}  $\delta_1,\delta_2,\delta_3,\delta_4 \in \{-\frac{3}{2}, -\frac{1}{2}\}$.
      The $0-1$ solutions $\mathbf{x}$ in \eqref{main_equality} (and consequently the characteristic vectors of the $(1,3)$-regular sets) are
      obtained for the tuples $(\delta_1, \delta_2, \delta_3, \delta_4)$ of the table:
      {\begin{center}\footnotesize
       \begin{tabular}{|r|r|r|r|cccccc|}\hline
         $\delta_1$ & $\delta_2$ &  $\delta_3$ & $\delta_4$ &   &   &   &   &   &   \\ \hline
         $-3/2$     & $-3/2$     & $-3/2$      & $-3/2$     & 5 & 6 & 7 & 8 & 9 & 10\\
         $-3/2$     & $-1/2$     & $-1/2$      & $-1/2$     & 2 & 3 & 4 & 5 & 7 & 9 \\
         $-1/2$     & $-3/2$     & $-1/2$      & $-1/2$     & 1 & 3 & 4 & 5 & 8 & 10\\
         $-1/2$     & $-1/2$     & $-1/2$      & $-3/2$     & 1 & 2 & 3 & 6 & 8 & 9 \\
         $-1/2$     & $-1/2$     & $-3/2$      & $-1/2$     & 1 & 2 & 4 & 6 & 7 & 10 \\ \hline
       \end{tabular}
      \end{center}}
\end{enumerate}
Note that in both cases, among the $2^4=16$ possible tuples $(\delta_1, \delta_2, \delta_3, \delta_4)$, we found $5$ tuples
producing $0-1$ solutions.
\end{example}

Note that in Theorem~\ref{kt_determination}, despite the set of possible tuples $(\delta_{i_1}, \delta_{i_2}, \ldots, \delta_{i_t})$ is finite,
its cardinality is the exponential number $2^t$. Therefore, when the multiplicity $t$ of the eigenvalue $\kappa-\tau$ is larger, the determination
of a tuple of scalers $(\delta_{i_1}, \delta_{i_2}, \ldots, \delta_{i_t})$ producing a $0-1$ solution in \eqref{main_equality} or the recognition
that there is no such solution, can not be computationally effective by checking all possible tuples. Considering the inequality (see, for instance,
\cite{CRS10})
$$
\max_{\mu \in \sigma(G) \setminus \{0\}} m_G(\mu) \le n - \gamma(G),
$$
where $m_G(\mu)$ is the multiplicity of the eigenvalue $\mu$ of the graph $G$, it follows that the graphs with higher domination number have
smaller upper bound for the maximum multiplicity of the nonzero eigenvalues of $G$. Thus, for those graphs the determination of $(\kappa,\tau)$-regular
sets with $\kappa-\tau \ne 0$ or the recognition that none of them there exist is computationally effective. Anyway, with the same purpose, the development of
computationally effective techniques for particular graph families remains an open problem and it is an interesting research line.

\section{Spectral properties}

The presence of $(\kappa,\tau)$-regular sets in graphs has deep influence on their spectrum. From Theorem~\ref{Th_Thompson}
it is immediate that a regular graph with a $(\kappa,\tau)$-regular set has $\kappa-\tau$ as an eigenvalue. However, in the
case of non-regular graphs, the presence of a $(\kappa,\tau)$-regular set does not imply that $\kappa-\tau$ is an  eigenvalue.
For instance, the graph $G$ depicted in Figure~\ref{fig4} has the $(1,1)$-regular set $\{b, e\}$ but $0$ is not an eigenvalue of $G$.
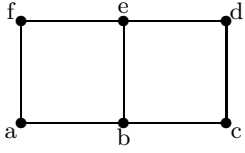
\begin{figure}[h]
\begin{center}
\unitlength=0.45 mm
\begin{picture}(205,45)(-200,113)
\put(-140,120){\circle*{3}} 
\put(-143,117){\makebox(0,0){\footnotesize a}}
\put(-110,120){\circle*{3}}  
\put(-110,116){\makebox(0,0){\footnotesize b}}
\put(-80,120){\circle*{3}} 
\put(-77,117){\makebox(0,0){\footnotesize c}}
\put(-80,150){\circle*{3}} 
\put(-77,153){\makebox(0,0){\footnotesize d}}
\put(-110,150){\circle*{3}}
\put(-110,154){\makebox(0,0){\footnotesize e}}
\put(-140,150){\circle*{3}} 
\put(-143,153){\makebox(0,0){\footnotesize f}}
%
\put(-140,120){\line(0,1){30}} 
\put(-110,120){\line(0,1){30}} 
\put(-80,120){\line(0,1){30}} 
\put(-140,120){\line(1,0){60}} 
\put(-140,150){\line(1,0){60}} 
\end{picture}
\caption{A graph $G$ with a $(1,1)$-regular set and spectrum $\sigma(G)=\{-1-\sqrt{2}, -1, 1-\sqrt{2}, -1+\sqrt{2}, 1, 1+\sqrt{2}\}$}\label{fig4}
\end{center}
\end{figure}
The graph $G$ of Figure~\ref{fig4} has also the $(2,1)$-regular sets $\{a,b,e,f\}$ and $\{b,c,d,e\}$, the $(1,2)$-regular set $\{a,c,d,f\}$ and
the $(0,1)$-regular sets $\{a, d\}$ and $\{c, f\}$.\\

The next theorem establishes a sufficient condition for arbitrary graphs with $(\kappa,\tau)$-regular sets to have $\kappa-\tau$ as an eigenvalue.

\begin{theorem}\cite{CardosoRama07}\label{Th_CR07}
Let $\lambda$ be an integer and let $G$ be a graph with a $(\kappa_1,\tau_1)$-regular set $S_1$, with $\tau_1>0$, and a $(\kappa_2,\tau_2)$-regular
set $S_2$, such that $S_1 \ne S_2$ and $k_1 - \tau_1 = \kappa_2 - \tau_2 = \lambda$. Then $\lambda \in \sigma(G)$ and
$\hat{\mathbf{u}} = \frac{\tau_2}{\tau_1}\mathbf{x} - \mathbf{y} \in \mathcal{E}_{G}(\lambda)$, where $\mathbf{x}$ is
the characteristic vector of $S_1$ and $\mathbf{y}$ is the characteristic vector of $S_2$.
\end{theorem}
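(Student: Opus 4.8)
The plan is to reduce everything to the linear-system characterization of Theorem~\ref{kt_theorem} and then read off the claim from a single eigenvalue computation. First I would invoke Theorem~\ref{kt_theorem} twice. Since $S_1$ is $(\kappa_1,\tau_1)$-regular with $\kappa_1-\tau_1=\lambda$, its characteristic vector $\mathbf{x}$ satisfies $\left(\mathbf{A}_G-\lambda I_n\right)\mathbf{x}=\tau_1\hat{\mathbf{e}}$, and likewise $\left(\mathbf{A}_G-\lambda I_n\right)\mathbf{y}=\tau_2\hat{\mathbf{e}}$ for the characteristic vector $\mathbf{y}$ of $S_2$, because $\kappa_2-\tau_2=\lambda$ as well.

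Next I would form $\hat{\mathbf{u}}=\frac{\tau_2}{\tau_1}\mathbf{x}-\mathbf{y}$, which is legitimate since $\tau_1>0$, and apply $\mathbf{A}_G-\lambda I_n$ to it. By linearity the two right-hand sides combine as $\frac{\tau_2}{\tau_1}\tau_1\hat{\mathbf{e}}-\tau_2\hat{\mathbf{e}}=\mathbf{0}$, so $\left(\mathbf{A}_G-\lambda I_n\right)\hat{\mathbf{u}}=\mathbf{0}$, that is, $\mathbf{A}_G\hat{\mathbf{u}}=\lambda\hat{\mathbf{u}}$. Thus, provided $\hat{\mathbf{u}}\neq\mathbf{0}$, it is an eigenvector of $\mathbf{A}_G$ associated with $\lambda$, which simultaneously yields $\lambda\in\sigma(G)$ and $\hat{\mathbf{u}}\in\mathcal{E}_G(\lambda)$.

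The main obstacle is precisely to rule out $\hat{\mathbf{u}}=\mathbf{0}$, and this is where the hypotheses $\tau_1>0$ and $S_1\neq S_2$ together with the $0$–$1$ nature of $\mathbf{x}$ and $\mathbf{y}$ enter. Suppose $\hat{\mathbf{u}}=\mathbf{0}$, so that $\mathbf{y}=\frac{\tau_2}{\tau_1}\mathbf{x}$. I would first observe that $\mathbf{x}\neq\mathbf{0}$: otherwise the equation $\left(\mathbf{A}_G-\lambda I_n\right)\mathbf{x}=\tau_1\hat{\mathbf{e}}$ would force $\tau_1=0$, contradicting $\tau_1>0$. Under the standing conventions on $(\kappa,\tau)$-regular sets one also has $\mathbf{y}\neq\mathbf{0}$ (either $S_2$ is proper with $\tau_2>0$, or $S_2=V(G)$ with $\mathbf{y}=\hat{\mathbf{e}}$). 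Reading $\mathbf{y}=\frac{\tau_2}{\tau_1}\mathbf{x}$ coordinatewise, any index $i$ with $\mathbf{y}_i=1$ forces $\mathbf{x}_i=1$ and $\tau_2/\tau_1=1$; hence $\tau_2=\tau_1$ and therefore $\mathbf{y}=\mathbf{x}$, i.e. $S_1=S_2$, contradicting $S_1\neq S_2$.

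Finally I would assemble the pieces: since $\hat{\mathbf{u}}\neq\mathbf{0}$ satisfies $\mathbf{A}_G\hat{\mathbf{u}}=\lambda\hat{\mathbf{u}}$, the integer $\lambda$ is an eigenvalue of $G$ and $\hat{\mathbf{u}}=\frac{\tau_2}{\tau_1}\mathbf{x}-\mathbf{y}\in\mathcal{E}_G(\lambda)$, as claimed. The only subtlety worth flagging is the degenerate possibility $S_2=\emptyset$, which would make $\hat{\mathbf{u}}=\mathbf{0}$ when $\tau_2=0$; this case is excluded by the conventions governing $(\kappa,\tau)$-regular sets, and I would note it in passing rather than treat it as a separate branch of the argument.
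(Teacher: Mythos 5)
Your proof is correct. The paper states Theorem~\ref{Th_CR07} without proof (citing \cite{CardosoRama07}), and your argument---applying the linear-system characterization of Theorem~\ref{kt_theorem} to both characteristic vectors, subtracting to get $\left(\mathbf{A}_G-\lambda I_n\right)\hat{\mathbf{u}}=\mathbf{0}$, and then ruling out $\hat{\mathbf{u}}=\mathbf{0}$ via $\tau_1>0$, the $0$-$1$ structure of $\mathbf{x},\mathbf{y}$, and $S_1\neq S_2$---is exactly the natural route through the paper's own machinery, with the nonvanishing of $\hat{\mathbf{u}}$ (the one step that genuinely requires an argument) handled rigorously.
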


\begin{example}
Consider the graph $G$ depicted in Figure~\ref{fig4}. Since the vertex subsets $S_1=\{a, b, e, f\}$ and $S_2=\{b, c, d, e\}$ are both $(2,1)$-regular
and the vertex subsets $T_1=\{a, d\}$ and $T_2=\{c, f\}$ are both $(0,1)$-regular, applying Theorem~\ref{Th_CR07}, it follows that $1$ and $-1$ are
both eigenvalues of $G$. Furthermore,
\begin{eqnarray*}
\bordermatrix{  &$\;$ \cr
              a & 1 \cr
              b & 0 \cr
              c &-1 \cr
              d &-1 \cr
              e & 0 \cr
              f & 1 \cr } \in {\mathcal E}_G(1) & \text{ and } & \bordermatrix{  &$\;$ \cr
                                                                               a & 1 \cr
                                                                               b & 0 \cr
                                                                               c &-1 \cr
                                                                               d & 1 \cr
                                                                               e & 0 \cr
                                                                               f &-1 \cr } \in {\mathcal E}_G(-1).
\end{eqnarray*}
\end{example}

Now, it is worth mention the concepts of main and non-main eigenvalues. An eigenvalue $\mu$ of a graph $G$, which has an associated eigenspace
${\mathcal E}_G(\mu)$ not orthogonal to the all-one vector $\hat{\mathbf{e}}$, is said to be \textit{main}. When ${\mathcal E}_G(\mu)$ is orthogonal to
$\hat{\mathbf{e}}$ the eigenvalue $\mu$ is referred as {\it non-main}. The concept of main (non-main) eigenvalue was introduced in 1970 by Cvetkovi\'{c}
\cite{Cvetkovic1970} (see also \cite{CDS79,CRS97}) and further investigated in many papers since then. For every graph $G$, its largest eigenvalue
$\lambda_1(G)$ is a main eigenvalue. In particular, it is well known that a graph  is regular if and only if it has only one main eigenvalue (see
\cite{Rowlinson07}, where a survey on main (non-main) eigenvalues was published).

Regarding graphs with just two main eigenvalues we may use $(\kappa ,\tau)$-regular sets for the determination of particular families using a graph
operation introduced in \cite{CSZ2010} and herein described as follows:

Consider the graph operation $H = G_1 \bigoplus_s^{\tau} G_2$, where $G_1$ is a ${\kappa }_1$-regular graph, $G_2$ is a ${\kappa }_2$-regular graph
and $H$ is obtained from $G_1$ and $G_2$ by connecting each vertex of $V(G_1)=\{x_1, \ldots, x_{n_1}\}$ to $\tau > 0$ vertices in
$V(G_2)=\{y_1, \ldots, y_{n_2}\}$, such that $B_i=N_H(x_i) \cap V(G_2), i=1, \ldots, n_1$, is a $1-(n_2,\tau,s)$ combinatorial
design \cite{Stinson03} (that is, $\bigcup_{i=1}^{n_1}{B_i} = V(G)$, $|B_i|=\tau$, for $i=1, \ldots, n_1$ and each vertex $v \in V(G_2)$ belongs
to exactly $s$ blocks $B_i$). Therefore, the vertex subsets $V(G_1)$ and $V(G_2)$ are $(\kappa_1,s)$-regular and $(\kappa_2,\tau)$-regular,
respectively.

\begin{theorem}\cite{CSZ2010}\label{ThCSZ}
Considering a ${\kappa}_1$-regular graph $G_1$ and a ${\kappa}_2$-regular graph $G_2$, let $H = G_1 \bigoplus_s^{\tau} G_2$ be
the graph obtained as above described. If $\mu$ is a main eigenvalue of $H$, then
\begin{equation} \label{2main}
\mu = \frac{{\kappa }_1+{\kappa }_2 \pm \sqrt{({\kappa }_2 - {\kappa }_1)^2 + 4s\tau}}{2}.
\end{equation}
\end{theorem}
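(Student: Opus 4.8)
The plan is to exploit the product structure of $H=G_1\bigoplus_s^\tau G_2$ by writing its adjacency matrix in block form and analyzing which eigenvectors can fail to be orthogonal to $\hat{\mathbf{e}}$. Writing $V(H)=V(G_1)\cup V(G_2)$ with $|V(G_1)|=n_1$ and $|V(G_2)|=n_2$, the adjacency matrix decomposes as
\begin{equation*}
\mathbf{A}_H=\begin{pmatrix} \mathbf{A}_{G_1} & B \\ B^T & \mathbf{A}_{G_2}\end{pmatrix},
\end{equation*}
where $B$ is the $n_1\times n_2$ bipartite incidence matrix of the edges joining $G_1$ to $G_2$. The combinatorial design condition is exactly what controls the row and column sums of $B$: because each block $B_i$ has size $\tau$, the row sums of $B$ all equal $\tau$, so $B\hat{\mathbf{e}}_{n_2}=\tau\hat{\mathbf{e}}_{n_1}$; because each vertex of $G_2$ lies in exactly $s$ blocks, the column sums all equal $s$, so $B^T\hat{\mathbf{e}}_{n_1}=s\hat{\mathbf{e}}_{n_2}$. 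These two identities, together with $\mathbf{A}_{G_1}\hat{\mathbf{e}}_{n_1}=\kappa_1\hat{\mathbf{e}}_{n_1}$ and $\mathbf{A}_{G_2}\hat{\mathbf{e}}_{n_2}=\kappa_2\hat{\mathbf{e}}_{n_2}$ (the regularity of the two factors), are the algebraic backbone of the argument.

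Next I would invoke the characterization of main eigenvalues. A main eigenvalue is one whose eigenspace is not orthogonal to $\hat{\mathbf{e}}$, so if $\mu$ is main for $H$ there is an eigenvector $\mathbf{w}=(\mathbf{w}_1,\mathbf{w}_2)^T$ with $\mathbf{A}_H\mathbf{w}=\mu\mathbf{w}$ and $\hat{\mathbf{e}}^T\mathbf{w}\neq 0$. The key reduction is to show that the main part of the spectrum of $H$ is governed by the $2\times 2$ \emph{quotient} matrix obtained from the equitable-type partition $\{V(G_1),V(G_2)\}$, namely
\begin{equation*}
Q=\begin{pmatrix} \kappa_1 & \tau \\ s & \kappa_2\end{pmatrix}.
\end{equation*}
Concretely, I would project $\mathbf{w}$ onto the span of $\hat{\mathbf{e}}_{n_1}$ and $\hat{\mathbf{e}}_{n_2}$ by setting $a=\hat{\mathbf{e}}_{n_1}^T\mathbf{w}_1$ and $b=\hat{\mathbf{e}}_{n_2}^T\mathbf{w}_2$, then multiply the two block rows of the eigenvalue equation $\mathbf{A}_H\mathbf{w}=\mu\mathbf{w}$ on the left by $\hat{\mathbf{e}}_{n_1}^T$ and $\hat{\mathbf{e}}_{n_2}^T$ respectively. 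Using the four sum identities above, the first row yields $\kappa_1 a+\tau b=\mu a$ and the second yields $s\,a+\kappa_2 b=\mu b$, i.e.\ $Q\binom{a}{b}=\mu\binom{a}{b}$. Since $\mu$ is main, $\hat{\mathbf{e}}^T\mathbf{w}=a+b\neq 0$, which forces $(a,b)\neq(0,0)$, so $\mu$ is an eigenvalue of $Q$. Computing the characteristic polynomial of $Q$, $\mu^2-(\kappa_1+\kappa_2)\mu+(\kappa_1\kappa_2-s\tau)=0$, and applying the quadratic formula delivers exactly equation \eqref{2main}.

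The step I expect to require the most care is justifying that $(a,b)\neq(0,0)$ — that is, ruling out the possibility that a main eigenvector projects to zero on \emph{both} all-one directions simultaneously. One must check that $a+b\neq 0$ genuinely precludes $a=b=0$, which is immediate, but the subtler point is verifying that the left-multiplication by $\hat{\mathbf{e}}^T$ does not silently discard the main character of $\mu$; this is handled by the observation that $\hat{\mathbf{e}}_{n_1}$ and $\hat{\mathbf{e}}_{n_2}$ are themselves eigenvectors of the diagonal blocks, so the partition into $\{V(G_1),V(G_2)\}$ is equitable and the quotient $Q$ captures precisely the $\hat{\mathbf{e}}$-component of the spectrum. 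Everything else is routine linear algebra, so the heart of the proof is setting up the block decomposition and extracting the design-based row/column sum identities correctly.
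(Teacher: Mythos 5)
Your proposal is correct, but note that the paper itself gives no proof of Theorem~\ref{ThCSZ}: the result is quoted from \cite{CSZ2010}, so the comparison here is against the standard argument rather than an in-paper one. Your route --- writing $\mathbf{A}_H$ in block form, extracting the row/column-sum identities $B\hat{\mathbf{e}}_{n_2}=\tau\hat{\mathbf{e}}_{n_1}$ and $B^T\hat{\mathbf{e}}_{n_1}=s\hat{\mathbf{e}}_{n_2}$ from the design condition, and projecting a main eigenvector onto the two all-one directions --- is precisely the equitable-partition/quotient-matrix argument, and it is equivalent to what the paper's own machinery would give: since $V(G_1)$ is $(\kappa_1,s)$-regular and $V(G_2)$ is $(\kappa_2,\tau)$-regular, Theorem~\ref{kt_theorem} yields $\mathbf{A}_H\mathbf{x}_1=\kappa_1\mathbf{x}_1+s\,\mathbf{x}_2$ and $\mathbf{A}_H\mathbf{x}_2=\tau\mathbf{x}_1+\kappa_2\mathbf{x}_2$ for the characteristic vectors $\mathbf{x}_1,\mathbf{x}_2$ of the two sides, and pairing these with a main eigenvector $\mathbf{w}$ reproduces your scalar system with $a=\mathbf{w}^T\mathbf{x}_1$, $b=\mathbf{w}^T\mathbf{x}_2$. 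So your proof is the natural one, just phrased through the block matrix rather than through $(\kappa,\tau)$-regular sets.

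One slip you should fix: with your conventions the projected equations are
\begin{equation*}
\kappa_1 a+s\,b=\mu a \qquad\text{and}\qquad \tau a+\kappa_2 b=\mu b,
\end{equation*}
not $\kappa_1 a+\tau b=\mu a$ and $s\,a+\kappa_2 b=\mu b$; indeed $\hat{\mathbf{e}}_{n_1}^T B$ picks up the \emph{column} sums of $B$, giving $s\hat{\mathbf{e}}_{n_2}^T$. Thus $(a,b)^T$ is an eigenvector of $Q^T$ rather than of $Q$. This is harmless, since $Q$ and $Q^T$ share the characteristic polynomial $\mu^2-(\kappa_1+\kappa_2)\mu+(\kappa_1\kappa_2-s\tau)$, whose roots are exactly \eqref{2main}, so the conclusion stands. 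Also, your closing worry about the projection ``silently discarding the main character'' of $\mu$ is unnecessary: mainness is used exactly once, to produce an eigenvector with $a+b\neq 0$, hence $(a,b)\neq(0,0)$, and that already certifies $\mu\in\sigma(Q^T)=\sigma(Q)$; no equitability of the partition needs to be invoked beyond the four sum identities you already stated.
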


The particular case of unicyclic graphs with just two main eigenvalues was investigated in \cite{HouTian06}.

\begin{theorem}\cite{HouTian06}\label{ThHouTian}
The graphs attained from a cycle $C_n$ by attaching $s > 0$ pendent vertices are all unicyclic graphs with exactly two main eigenvalues.
\end{theorem}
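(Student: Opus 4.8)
The plan is to exhibit the graph in question as a special case of the operation $\bigoplus$ from Theorem~\ref{ThCSZ}, and then to pin down the number of main eigenvalues by combining the constraint that theorem provides with the elementary fact, recalled just above it, that a graph is regular if and only if it has a single main eigenvalue.

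First I would fix the construction precisely: let $H$ be the graph obtained from $C_n$ by attaching exactly $s$ pendant vertices to each of its $n$ vertices. Set $G_1 = C_n$, which is $\kappa_1 = 2$-regular, and let $G_2$ be the empty graph on the $sn$ pendant vertices, which is $\kappa_2 = 0$-regular. I would then verify that $H = G_1 \bigoplus_1^{s} G_2$: each cycle vertex $x_i$ is joined to exactly $\tau = s$ pendant vertices, and since every pendant vertex is adjacent to a single cycle vertex, the blocks $B_i = N_H(x_i)\cap V(G_2)$ partition $V(G_2)$. Consequently each $v \in V(G_2)$ lies in exactly one block, so $\{B_1,\dots,B_n\}$ is a $1\text{-}(sn,s,1)$ design, exactly as the operation requires.

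Next I would invoke Theorem~\ref{ThCSZ} with $\kappa_1 = 2$ and $\kappa_2 = 0$; here the design parameter equals $1$ and the block size equals $s$, so the expression under the square root is $(\kappa_2-\kappa_1)^2 + 4\cdot 1\cdot s = 4 + 4s$, and every main eigenvalue $\mu$ of $H$ must satisfy
\[
\mu = \frac{2 \pm \sqrt{4+4s}}{2} = 1 \pm \sqrt{1+s}.
\]
This shows that $H$ has at most two main eigenvalues. On the other hand $H$ is non-regular, since its pendant vertices have degree $1$ while its cycle vertices have degree $s+2$; by the characterization recalled before Theorem~\ref{ThCSZ}, a non-regular graph has more than one main eigenvalue. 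These two facts together force $H$ to have exactly two main eigenvalues, namely $1+\sqrt{1+s}$ and $1-\sqrt{1+s}$. Finally, attaching pendant vertices to $C_n$ introduces no new cycle, so $H$ is unicyclic, completing the argument.

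The step I expect to be the main obstacle is the first translation: recognizing the pendant attachment as the operation $\bigoplus_1^{s}$ and confirming that the neighborhoods of the cycle vertices really do form a $1\text{-}(sn,s,1)$ design. Once this is established, the spectral content is supplied wholesale by Theorem~\ref{ThCSZ}, and the count is closed by a short regularity argument. A point worth emphasizing is that Theorem~\ref{ThCSZ} only yields the upper bound ``at most two''; the matching lower bound ``at least two'' is independent of it and comes from non-regularity, so it is the combination of the two facts---rather than Theorem~\ref{ThCSZ} on its own---that delivers \emph{exactly} two.
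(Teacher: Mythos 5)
Your argument for the direction you treat is correct, and it is essentially the computation the paper itself records in the remark following the theorem: the paper writes $H = snK_1 \bigoplus_s^{1} C_n$, i.e., with the roles of the two regular graphs exchanged relative to your $C_n \bigoplus_1^{s}\, snK_1$, and both set-ups satisfy the design condition and yield the same two values $1 \pm \sqrt{1+s}$ via Theorem~\ref{ThCSZ}. Your observation that Theorem~\ref{ThCSZ} only gives the upper bound ``at most two,'' while non-regularity supplies ``at least two,'' is also accurate and worth making explicit.

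The genuine gap is that this is only half of the statement. The theorem, which is Hou and Tian's classification (the paper cites \cite{HouTian06} and does not reprove it; its remark only computes the eigenvalues, as you did), asserts that the graphs obtained from $C_n$ by attaching $s>0$ pendant vertices at each cycle vertex are \emph{all} of the unicyclic graphs with exactly two main eigenvalues. That is, it also contains the converse: every unicyclic graph with exactly two main eigenvalues must have this form --- the same number $s$ of pendant vertices at every vertex of the cycle, and no hanging trees other than these pendant edges. Your proof establishes only the inclusion ``every such graph is a unicyclic graph with exactly two main eigenvalues'' and says nothing about completeness. The converse cannot be extracted from Theorem~\ref{ThCSZ}, since that theorem presupposes that the graph already carries the $\bigoplus$ structure; it requires a different tool, namely the walk-matrix characterization: a graph $G$ has exactly two main eigenvalues if and only if it is non-regular and there exist scalars $a,b$ with $\sum_{u \in N_G(v)} d_G(u) = a\, d_G(v) + b$ for every vertex $v$, and one must then show by a structural analysis that the only unicyclic graphs satisfying this condition are the cycles with an equal number of pendant vertices attached to each cycle vertex. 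Without an argument of this kind, the classification --- which is the actual content of the cited theorem --- remains unproved.
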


Note that any unycliclic graph $H$ as referred in Theorem~\ref{ThHouTian} is such that $H=snK_1 \bigoplus_s^{1} C_n$ and by Theorem~\ref{ThCSZ}
its main eigeinvalues are
$$
\mu = 1 \pm \sqrt{1+s}.
$$
Since the largest eigenvalue of a graph is main, it follows that $\mu=1+\sqrt{1+s}$ is the largest eigenvalue of $H$.

\bigskip

\begin{theorem}\cite{CSZ2010}\label{Th_CSZ}
Let $S$ be a $(\kappa,\tau)$-regular set of a graph $G$, with characteristic vector $\mathbf{x}(S)$ and let $\mu$ be an eigenvalue of $G$. Then $\mu$
is non-main if and only if
$$
\mu = \kappa - \tau  \qquad \text{ or } \qquad \mathbf{x}(S) \in \left ({\mathcal E}_G(\mu)\right )^{\bot},
$$
where $\left({\mathcal E}_G(\mu)\right)^{\bot}$ denotes the vector space orthogonal to ${\mathcal E}_G(\mu)$.
\end{theorem}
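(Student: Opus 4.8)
The plan is to reduce the whole equivalence to a single scalar identity relating the two ``overlap'' quantities $\mathbf{w}^T\mathbf{x}(S)$ and $\mathbf{w}^T\hat{\mathbf{e}}$, evaluated on an arbitrary eigenvector $\mathbf{w}\in{\mathcal E}_G(\mu)$. Throughout I would abbreviate $\mathbf{x}=\mathbf{x}(S)$ and $\lambda=\kappa-\tau$, and I would use the paper's standing convention that $\tau>0$ for a $(\kappa,\tau)$-regular set $S\subset V(G)$, since this hypothesis is what makes both implications go through.

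First I would record the defining equation of a $(\kappa,\tau)$-regular set supplied by Theorem~\ref{kt_theorem}: the characteristic vector $\mathbf{x}$ satisfies $(\mathbf{A}_G-\lambda I_n)\mathbf{x}=\tau\hat{\mathbf{e}}$, equivalently $\mathbf{A}_G\mathbf{x}=\lambda\mathbf{x}+\tau\hat{\mathbf{e}}$. Then, for any $\mathbf{w}\in{\mathcal E}_G(\mu)$, I would compute the scalar $\mathbf{w}^T\mathbf{A}_G\mathbf{x}$ in two ways. Substituting the defining equation gives $\mathbf{w}^T\mathbf{A}_G\mathbf{x}=\lambda(\mathbf{w}^T\mathbf{x})+\tau(\mathbf{w}^T\hat{\mathbf{e}})$, whereas the symmetry of $\mathbf{A}_G$ together with $\mathbf{A}_G\mathbf{w}=\mu\mathbf{w}$ gives $\mathbf{w}^T\mathbf{A}_G\mathbf{x}=(\mathbf{A}_G\mathbf{w})^T\mathbf{x}=\mu(\mathbf{w}^T\mathbf{x})$. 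Equating and rearranging produces the key identity
\begin{equation}
(\mu-\lambda)\,(\mathbf{w}^T\mathbf{x})=\tau\,(\mathbf{w}^T\hat{\mathbf{e}}),\qquad \forall\,\mathbf{w}\in{\mathcal E}_G(\mu).\label{key_identity}
\end{equation}

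With \eqref{key_identity} isolated, both directions follow by inspection. For the forward implication, if $\mu$ is non-main then $\mathbf{w}^T\hat{\mathbf{e}}=0$ for every $\mathbf{w}\in{\mathcal E}_G(\mu)$, so the right-hand side of \eqref{key_identity} vanishes; hence either $\mu=\lambda$, or else $\mu\ne\lambda$ forces $\mathbf{w}^T\mathbf{x}=0$ for all such $\mathbf{w}$, i.e.\ $\mathbf{x}\in({\mathcal E}_G(\mu))^{\bot}$. For the converse, if $\mathbf{x}\in({\mathcal E}_G(\mu))^{\bot}$ then the left-hand side of \eqref{key_identity} vanishes, and since $\tau>0$ we obtain $\mathbf{w}^T\hat{\mathbf{e}}=0$ for all $\mathbf{w}$, so $\mu$ is non-main; and if instead $\mu=\lambda$, the left-hand side is again $0$, giving the same conclusion.

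Because the computation is so short, the only real obstacle is conceptual rather than technical: one must be alert to the degenerate case $S=V(G)$, where $\tau=0$ and $\mathbf{x}=\hat{\mathbf{e}}$, in which the first disjunct $\mu=\kappa$ would wrongly label the main eigenvalue $\lambda_1=\kappa$ as non-main. Invoking the standing assumption $\tau>0$ for proper $(\kappa,\tau)$-regular sets excludes this case and legitimizes the sign/cancellation reasoning applied to \eqref{key_identity} in both directions; with that hypothesis in place no further case analysis is needed.
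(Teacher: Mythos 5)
Your proof is correct, and it takes what is essentially the paper's own approach: although the paper quotes this theorem from \cite{CSZ2010} without reproducing a proof, the identical computation appears in its proof of the subsequent theorem on main eigenvalues, where multiplying \eqref{kt_equation} on the left by an eigenvector $\hat{\mathbf{u}}^T$ yields exactly your key identity, namely \eqref{marca2}, $(\mu-(\kappa-\tau))\,\hat{\mathbf{u}}^T\overline{\mathbf{x}}=\tau\,\hat{\mathbf{u}}^T\hat{\mathbf{e}}$. Your explicit appeal to the standing convention $\tau>0$ (which excludes the degenerate case $S=V(G)$, where $\kappa-\tau=\kappa$ is the main eigenvalue $\lambda_1$) is precisely the hypothesis that makes the converse direction work, so the argument is complete as written.
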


From the above theorem we have the following corollary.

\begin{corollary}\cite{CSZ2010}
Let $\kappa, \tau \in {\mathbb Z}^+\cup\{0\}$, with $\tau >0,$ where ${\mathbb Z}^+$ denotes the set of positive integers. If $\mu = \kappa - \tau$
is a main eigenvalue of $G$, then $G$ does not have a $(\kappa,\tau)$-regular set.
\end{corollary}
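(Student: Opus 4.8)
The plan is to prove the contrapositive, using Theorem~\ref{Th_CSZ} as the main engine. Suppose, toward a contradiction, that $\mu = \kappa - \tau$ is a main eigenvalue of $G$ and yet $G$ does have a $(\kappa,\tau)$-regular set $S$ with characteristic vector $\mathbf{x}(S)$. Since $S$ is a $(\kappa,\tau)$-regular set, Theorem~\ref{Th_CSZ} applies directly to the eigenvalue $\mu$ of $G$: it tells us that $\mu$ is non-main if and only if either $\mu = \kappa - \tau$ or $\mathbf{x}(S) \in (\mathcal{E}_G(\mu))^{\bot}$. The key observation is that the first disjunct is satisfied by hypothesis, namely $\mu = \kappa - \tau$.

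First I would invoke the ``if'' direction of Theorem~\ref{Th_CSZ}: because $\mu = \kappa - \tau$ holds, the right-hand side of the biconditional is true, and therefore $\mu$ must be non-main. This immediately contradicts the standing assumption that $\mu = \kappa - \tau$ is main, since an eigenvalue cannot be both main and non-main. The contradiction shows that the assumption ``$G$ has a $(\kappa,\tau)$-regular set'' is untenable whenever $\kappa - \tau$ is a main eigenvalue, which is exactly the claim of the corollary. The hypothesis $\kappa, \tau \in \mathbb{Z}^+ \cup \{0\}$ with $\tau > 0$ simply guarantees that we are in the genuine $(\kappa,\tau)$-regular setting ($S \subset V(G)$ with $\tau > 0$) in which Theorem~\ref{Th_CSZ} was stated, so that the theorem is applicable.

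I do not expect any serious obstacle here, as the corollary is essentially a one-line logical consequence of Theorem~\ref{Th_CSZ}. The only subtlety worth flagging is the direction of implication in the biconditional of Theorem~\ref{Th_CSZ}: we need the ``$\mu = \kappa-\tau \implies \mu$ non-main'' direction, which is the easy half. One should be careful to note that Theorem~\ref{Th_CSZ} is phrased for an already-existing $(\kappa,\tau)$-regular set, so the argument is naturally set up as a proof by contradiction (assume the set exists, derive that $\mu$ is non-main, contradict mainness) rather than a direct implication. An alternative, non-contradiction phrasing would observe that if a $(\kappa,\tau)$-regular set existed then $\kappa-\tau$ would automatically be non-main by Theorem~\ref{Th_CSZ}, so the hypothesis that $\kappa-\tau$ is main forces no such set to exist; I would present whichever reads more cleanly.
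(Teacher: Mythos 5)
Your argument is correct and coincides with the paper's own (implicit) justification: the corollary is stated there as an immediate consequence of Theorem~\ref{Th_CSZ}, exactly via the ``$\mu=\kappa-\tau \implies \mu$ non-main'' half of the biconditional, assuming a $(\kappa,\tau)$-regular set exists and deriving a contradiction with mainness. No gaps; your flagged subtleties (direction of the implication, applicability of the theorem only when the set exists) are handled appropriately.
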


Now we present the following theorem.

\begin{theorem}
Let $G$ be a graph with a $(\kappa,\tau)$-regular set $S \subset V(G)$ and let $\overline{\mathbf{x}}$ be a particular solution of \eqref{kt_equation}.
If $\mu$ is a main eigenvalue of $G$ and $\hat{\mathbf{u}} \in {\mathcal E}_G(\mu)$ is not orthogonal to $\hat{\mathbf{e}}$, then
\begin{equation}
\hat{\mathbf{u}}^T\overline{\mathbf{x}} \ne 0 \qquad \text{ and } \qquad \mu = \tau \frac{\hat{\mathbf{u}}^T\hat{\mathbf{e}}}{\hat{\mathbf{u}}^T\overline{\mathbf{x}}}+ (\kappa-\tau). \label{marca1}
\end{equation}
\end{theorem}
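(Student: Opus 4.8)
The plan is to exploit the defining relation of the particular solution, namely $\left(\mathbf{A}_G - (\kappa-\tau)I_n\right)\overline{\mathbf{x}} = \tau\hat{\mathbf{e}}$, and to probe it with the eigenvector $\hat{\mathbf{u}}$ by left-multiplication. First I would form the scalar $\hat{\mathbf{u}}^T\left(\mathbf{A}_G - (\kappa-\tau)I_n\right)\overline{\mathbf{x}}$ and set it equal to $\tau\,\hat{\mathbf{u}}^T\hat{\mathbf{e}}$, which is legitimate since $\overline{\mathbf{x}}$ satisfies \eqref{kt_equation}.

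The central simplification comes from the symmetry of the adjacency matrix. Because $\mathbf{A}_G^T = \mathbf{A}_G$ and $\mathbf{A}_G\hat{\mathbf{u}} = \mu\hat{\mathbf{u}}$, I would rewrite $\hat{\mathbf{u}}^T\mathbf{A}_G = (\mathbf{A}_G\hat{\mathbf{u}})^T = \mu\,\hat{\mathbf{u}}^T$, so that the left-hand side collapses to $\left(\mu - (\kappa-\tau)\right)\hat{\mathbf{u}}^T\overline{\mathbf{x}}$. This produces the single scalar identity $\left(\mu - (\kappa-\tau)\right)\hat{\mathbf{u}}^T\overline{\mathbf{x}} = \tau\,\hat{\mathbf{u}}^T\hat{\mathbf{e}}$, from which both assertions of the theorem will be read off.

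From this identity the conclusions follow at once. The hypothesis that $\hat{\mathbf{u}}$ is not orthogonal to $\hat{\mathbf{e}}$ gives $\hat{\mathbf{u}}^T\hat{\mathbf{e}} \ne 0$, while the standing assumption that $S$ is a proper subset of $V(G)$ forces $\tau > 0$; hence the right-hand side is nonzero, and therefore $\hat{\mathbf{u}}^T\overline{\mathbf{x}} \ne 0$, which is the first claim. Dividing the identity by $\hat{\mathbf{u}}^T\overline{\mathbf{x}}$ and rearranging then yields $\mu = \tau\,\hat{\mathbf{u}}^T\hat{\mathbf{e}}/(\hat{\mathbf{u}}^T\overline{\mathbf{x}}) + (\kappa-\tau)$. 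The computation is essentially immediate; the only step needing attention is that $\hat{\mathbf{u}}^T\overline{\mathbf{x}} \ne 0$ must be established \emph{before} the division rather than silently assumed, and this is exactly what the nonvanishing of $\tau\,\hat{\mathbf{u}}^T\hat{\mathbf{e}}$ supplies.
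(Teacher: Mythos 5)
Your proposal is correct and follows essentially the same route as the paper: left-multiply the system \eqref{kt_equation} by $\hat{\mathbf{u}}^T$, use the symmetry of $\mathbf{A}_G$ to collapse the left side to $\left(\mu-(\kappa-\tau)\right)\hat{\mathbf{u}}^T\overline{\mathbf{x}}$, and read off both claims from the fact that the right-hand side $\tau\,\hat{\mathbf{u}}^T\hat{\mathbf{e}}$ is nonzero. The only (harmless) difference is that the paper first invokes Theorem~\ref{Th_CSZ} to note $\mu \ne \kappa-\tau$, whereas you extract all the needed nonvanishing directly from the scalar identity, which is in fact slightly more self-contained.
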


\begin{proof}
Since $\mu$ is a main eigenvalue of $G$, by Theorem~\ref{Th_CSZ}, $\mu \ne k-\tau$.  Taking into account that $\overline{\mathbf{x}}$ is a particular solution of \eqref{kt_equation},
multiplying both sides of \eqref{kt_equation} on the left by $\hat{\mathbf{u}}^T$, we obtain
\begin{equation}
(\mu - (\kappa-\tau))\hat{\mathbf{u}}^T\overline{\mathbf{x}} = \tau \hat{\mathbf{u}}^T\hat{\mathbf{e}}. \label{marca2}
\end{equation}
Therefore, $\hat{\mathbf{u}}^T\overline{\mathbf{x}} \ne 0$ and from \eqref{marca2} it follows
$\mu = \tau \frac{\hat{\mathbf{u}}^T\hat{\mathbf{e}}}{\hat{\mathbf{u}}^T\overline{\mathbf{x}}} + (\kappa-\tau).$
\end{proof}

A result not much different from \eqref{marca1} but more restrictive appear in \cite{CSZ2010}.

\bigskip

Considering a graph $G$, a \textit{star set} for an eigenvalue $\mu \in \sigma(G)$, with multiplicity $m_G(\mu)$, is a vertex subset $X$
such that $|X|=m_G(\mu)$ and the graph $G-X$ does not have $\mu$ as an eigenvalue. The vertex complement subset $V(G) \setminus X$ is called a
co-star set and the graph $G-X$ is called a \textit{star complement} for $\mu$. The main properties of star sets and star complements
appear in \cite{CRS10}.

\begin{theorem}\cite{ACS2013}
Let $G$ be a graph and $X \subset V(G)$  a star (or co-star) set for the eigenvalue $\mu \in \sigma (G)$. If $X$ or $V(G) \setminus X$
is $(\kappa,\tau)$-regular in $G$, then $\mu$ is non-main if and only if $\mu = \kappa-\tau$.
\end{theorem}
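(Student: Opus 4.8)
The plan is to prove the two implications separately: the implication from $\mu=\kappa-\tau$ to $\mu$ being non-main is immediate from Theorem~\ref{Th_CSZ}, while the converse carries all the content and is where the star-set hypothesis enters. Throughout, let $S\in\{X,V(G)\setminus X\}$ denote the set that is $(\kappa,\tau)$-regular, so that Theorem~\ref{Th_CSZ} applies to $S$ and tells us that $\mu$ is non-main if and only if $\mu=\kappa-\tau$ or $\mathbf{x}(S)\in(\mathcal{E}_G(\mu))^{\bot}$. For the easy direction, if $\mu=\kappa-\tau$ then the first alternative of Theorem~\ref{Th_CSZ} holds, so $\mu$ is non-main; this uses nothing about star sets.

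For the converse, assume $\mu$ is non-main; I must force $\mu=\kappa-\tau$, and by Theorem~\ref{Th_CSZ} it suffices to show that $\mathbf{x}(S)\notin(\mathcal{E}_G(\mu))^{\bot}$, so that only the first alternative can survive. Let $Y\in\{X,V(G)\setminus X\}$ be whichever of the two sets is the star set for $\mu$; then $|Y|=m_G(\mu)\ge 1$. I would first record the standard characterization of star sets (see \cite{CRS10}): no nonzero $\mathbf{v}\in\mathcal{E}_G(\mu)$ vanishes identically on $Y$, equivalently the restriction map $\mathbf{v}\mapsto\mathbf{v}|_Y$ is a bijection $\mathcal{E}_G(\mu)\to\mathbb{R}^{Y}$. (Were some $\mathbf{0}\ne\mathbf{v}\in\mathcal{E}_G(\mu)$ to vanish on $Y$, its restriction to $V(G)\setminus Y$ would be a $\mu$-eigenvector of $G-Y$, contradicting that $Y$ is a star set.) From surjectivity I obtain $\mathbf{w}\in\mathcal{E}_G(\mu)$ whose restriction to $Y$ is all-ones, whence $\langle\mathbf{x}(Y),\mathbf{w}\rangle=\sum_{j\in Y}\mathbf{w}_j=|Y|\ne 0$; thus $\mathbf{x}(Y)\notin(\mathcal{E}_G(\mu))^{\bot}$.

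It then remains to pass from $Y$ to $S$, which may be $Y$ itself or its complement. Here the hypothesis that $\mu$ is non-main is used in the form $\hat{\mathbf{e}}\in(\mathcal{E}_G(\mu))^{\bot}$. Since $\mathbf{x}(Y)+\mathbf{x}(V(G)\setminus Y)=\hat{\mathbf{e}}$, for every $\mathbf{v}\in\mathcal{E}_G(\mu)$ we have $\langle\mathbf{x}(V(G)\setminus Y),\mathbf{v}\rangle=-\langle\mathbf{x}(Y),\mathbf{v}\rangle$, so $\mathbf{x}(V(G)\setminus Y)\notin(\mathcal{E}_G(\mu))^{\bot}$ as well. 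Hence neither characteristic vector is orthogonal to $\mathcal{E}_G(\mu)$; in particular $\mathbf{x}(S)\notin(\mathcal{E}_G(\mu))^{\bot}$, and Theorem~\ref{Th_CSZ} leaves only $\mu=\kappa-\tau$, as required.

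I expect the single real obstacle to be the star-set step, namely establishing $\mathbf{x}(Y)\notin(\mathcal{E}_G(\mu))^{\bot}$ from the non-vanishing/bijectivity property; everything afterwards is bookkeeping. The one genuinely delicate point is that $S$ need not be the star set itself, as it may be the co-star set, so the surjectivity argument cannot be applied to $S$ directly; it is precisely the non-main hypothesis, through $\hat{\mathbf{e}}\perp\mathcal{E}_G(\mu)$, that bridges $Y$ and its complement. I would also make sure to note the trivial but necessary fact $Y\ne\emptyset$, which holds since $\mu\in\sigma(G)$ gives $m_G(\mu)\ge 1$, as it is what makes $|Y|\ne 0$ in the key inequality.
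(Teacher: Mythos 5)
Your proof is correct. Note that the paper itself states this theorem without proof (it only cites \cite{ACS2013}), so there is no in-paper argument to compare against; your derivation is essentially the standard one from the cited source: the easy direction is the first alternative of Theorem~\ref{Th_CSZ}, and the hard direction combines Theorem~\ref{Th_CSZ} with the defining property of a star set $Y$, namely that the restriction map $\mathcal{E}_G(\mu)\to\mathbb{R}^{Y}$ is injective (your $G-Y$ eigenvector argument) and hence, by the dimension count $|Y|=m_G(\mu)$, surjective, which gives $\mathbf{x}(Y)\notin\left(\mathcal{E}_G(\mu)\right)^{\bot}$. You also correctly identified and handled the one delicate point: when the $(\kappa,\tau)$-regular set is the co-star set rather than the star set, the non-main hypothesis must be invoked in the form $\hat{\mathbf{e}}\perp\mathcal{E}_G(\mu)$ to transfer non-orthogonality from $\mathbf{x}(Y)$ to $\mathbf{x}(V(G)\setminus Y)$.
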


\medskip\textbf{Acknowledgments}
The author was partially supported by the Portuguese Foundation for Science and Technology (\textquotedblleft FCT-Funda\c{c}\~{a}o
para a Ci\^{e}ncia e a Tecnologia\textquotedblright), through CIDMA - Center for Research and Development in Mathematics and
Applications, within project UID/MAT/04106/2013.

\end{document}